\tikzset{main node/.style={circle,fill=black!3,draw,minimum size=0.6cm,inner sep=0pt}}
\font\zzzsy =cmsy10 at 14pt
\font\zzzsyb =cmsy10 at 20pt
\def\bigtimes{\mathop{\mathchoice{\hbox{\zzzsyb\char2}}{\hbox{\zzzsy\char2}}{\scriptstyle\times}{\scriptscriptstyle\times}}}
\def\theequation{\thesection.\@arabic \c@equation}
\def\@citecolor{blue}
\def\@urlcolor{blue}
\def\@linkcolor{blue}
\def\theenumi{\@roman\c@enumi}
\theoremstyle{definition}
\newtheorem{theorem}{Theorem}[section]
\newtheorem{lemma}[theorem]{Lemma}
\newtheorem{corollary}[theorem]{Corollary}
\newtheorem{proposition}[theorem]{Proposition}
\newtheorem*{claim*}{Claim}
\theoremstyle{definition}
\newtheorem{remark}[theorem]{Remark}
\newtheorem{remarks}[theorem]{Remarks}
\newtheorem{example}[theorem]{Example}
\newtheorem{definition}[theorem]{Definition}
\def\NZQ{\mathbb}               % the font for N,Z,Q,R,C
\def\ZZ{{\NZQ Z}}
\def\QQ{{\NZQ Q}}
\def\CC{{\NZQ C}}
\def\opn#1#2{\def#1{\operatorname{#2}}} % to make operators
\opn\chara{char}
\opn\length{\ell}
\opn\projdim{proj\,dim}
\opn\depth{depth}
\opn\reg{reg}
\opn\lreg{lreg}
\opn\sat{^{sat}}
\opn\lex{^{lex}}
\opn\Ker{Ker}
\opn\Coker{Coker}
\opn\Im{Im}
\opn\Hom{Hom}
\opn\Tor{Tor}
\opn\Ext{Ext}
\opn\End{End}
\opn\Aut{Aut}
\opn\id{id}
\opn\GL{GL}
\renewcommand{\leq}{\leqslant}
\renewcommand{\geq}{\geqslant}
\renewcommand{\zeta}{\texttt{\large{Z}}}
\let\lra\longrightarrow
\newcommand{\mb}[1]{\mathbb{#1}}
\newcommand{\mc}[1]{\mathcal{#1}}
\newcommand{\mf}[1]{\mathfrak{#1}}
\newcommand{\tf}[1]{\textbf{#1}}
\newcommand{\dbrightarrow}[4]{\xymatrix{#1\ar@<2pt>[r]^{#3} \ar@<-2pt>[r]_{#4} & #2}}
\newcommand{\dbleftarrow}[4]{\xymatrix{#1 & #2\ar@<2pt>[l]^{#3} \ar@<-2pt>[l]_{#4}}}
\newcommand{\looparrow}[2]{\xymatrix{#1\ar@(ul,ur)^{#2}}}
\newcommand{\eloop}[1]{\xymatrix@1{#1\ar @{-} @(ul,ur) }}
\DeclareMathOperator{\Eq}{Eq}
\DeclareMathOperator{\Ann}{Ann}
\title{Zero-divisor graphs and zero-divisor functors}
\author{Enrico Sbarra}
\address{Enrico Sbarra - Dipartimento di Matematica - Universit\`a degli Studi di Pisa - Largo Bruno Pontecorvo 5 - 56127 Pisa - Italy}
\email{enrico.sbarra@unipi.it}
\author{Maurizio Zanardo}
\address{Maurizio Zanardo - Dipartimento di Matematica - Universit\`a degli Studi di Pisa - Largo Bruno Pontecorvo 5 - 56127 Pisa - Italy} 
\email{m.zanardo@studenti.unipi.it}
\subjclass[2010]{Primary 05C25; Secondary 18A30, 13P25}
\keywords{Zero-divisor graph, zero-divisor functor, staircase graph, artinian rings}
\begin{document}
%\tableofcontents
\begin{abstract}
Inspired by a very recent work of A. \DJ uri\'c, S. Jev\dj eni\'c and N. Stopar, we introduce a new definition of zero-divisor graphs attached to rings, that includes all of the classical definitions already known in the literature. We provide an interpretation of such graphs as images of a functor, that we call zero-divisor functor and which is associated with a family of special equivalence relations fixed beforehand. We thus recover and generalize many known results for zero-divisor graphs and provide a framework which might be useful for further investigations on this topic. 
\end{abstract}

\maketitle
\smallskip

\section*{Introduction}
Throughout this paper a ring will be a unitary commutative ring. 
The notion of zero-divisor graph was introduced for the first time by Beck in 1988, see  \cite{B88}, with the purpose of investigating the structure of the underlying ring. Given a ring $A$, Beck's zero-divisor graph  $G(A)$ is a graph whose vertex set $V(A)$ is the set of all elements of  $A$ and the set of edges $E(A)$ is determined by the following zero-divisor relation on the elements of $A$: two vertices $a,b\in V(A)=A$ are adjacent if and only if $a$ and $b$ satisfy the relation $ab=0$ in $A$. The main concern in \cite{B88} was to study the chromatic and clique  numbers of $G(A)$. Shortly after, in \cite{AL99},  Anderson and Livingston introduced a slight modification of the definition of zero-divisor graph, called $\Gamma(A)$, with the purpose of simplifying $G(A)$ and its visualization without losing the relevant pieces of information therein contained; their definition of the graph is considered as classical in the literature.  Many properties of  $\Gamma(A)$, such as connectedness, see \cite{CSSS12}, and its invariants, as for instance diameter, girth, and chromatic number,  have been later studied in \cite{AM04} and \cite{AL99}. These papers also address the problem of finding rings $A$ whose zero-divisor graph  belongs to special classes of graphs, such as star graphs, complete graphs, $r$-partite complete graphs, planar graphs etc., see also  \cite{AMY03}.

Since $\Gamma(A)$ can have very many edges even when  $A$ is finite of small cardinality, in \cite{M02}  Mulay  introduced a new relation, which we denote $\asymp_A$, on the elements of $A$, defined as follows:  given $a, b\in A$, one lets $a\asymp_A b$ and, therefore, $a, b$ are adjacent as vertices of $\Gamma(A)$, if and only if  $\Ann(a)=\Ann(b)$.  The resulting graph, denoted by  $\Gamma_E(A)$, has been later referred to as {\it compressed zero-divisor graph}, see  \cite{AL12}.
Thus, the purpose of $\Gamma_E(A)$ is to simplify further the original graph by ``compressing'' it or, in other words, by diminishing the ``noise'', i.e. the redundancy, given by those elements of $A$ which essentially behave in the same fashion. Compressed zero-divisor graphs have been subsequently studied by several authors, cf. for instance \cite{SW11}, \cite{CSSS12} and \cite{AL12}.

In a very recent paper \cite{DJS21}, \DJ uri\'c, Jev\dj eni\'c and  Stopar propose  a new approach to the study  of zero-divisor graphs, by looking at functorial properties they might be endowed with. They define a functor, called $\Theta$, from the category of finite rings to that of graphs with loops; the image of a ring $A$ is a compressed zero-divisor graph $\Theta(A)$, whose set of vertices is given by  $A/\hspace{-.2cm}\sim_A$, where $a\sim_A b$ if and only if  $(a)=(b)$ is the same ideal of $A$, and where two equivalence classes $[a],[b]\in A/\hspace{-.2cm}\sim_A$, i.e. vertices of $\Theta(A)$,  are adjacent if and only if $ab=0$ in $A$.
%They prove that $\Theta$ preserves finite products and that if $\Theta(A)$ factorizes as a product of graphs then also  $A$ is a product of rings, see \cite[Proposition 3.4 and Theorem 3.6]{DJS21}. Also, a full description of the zero-divisor graph $\Theta(\ZZ/(m))$ as finite product of staircase graphs is provided, cf. \cite[Proposition 4.2]{DJS21}. As a corollary, they were able to characterize finite principal ideal rings as those rings whose graph $\Theta(A)$ is a finite product of staircase graphs. In the final part of their paper they suggest a way to generalize their definition of $\Theta$ to infinite rings, and this is the starting point of our work. 

\smallskip
The starting point of our work is to develop one of the main ideas in \cite{DJS21} and the leitmotif of the first part of this paper is to search for equivalence relations that are a good compromise between having  functorial properties and producing as little noise as possible on zero-divisor graphs, for instance preserving their compression.  From this perspective, one may wonder which are the appropriate coarsest, i.e .minimal, equivalence relations such that the associated zero-divisor functor satisfies a certain property. Another interesting question is to study what happens if we introduce a degree of freedom into the problem by letting each ring to be endowed with its own equivalence relation. 
  
We introduce some new definitions which provide the right framework to analyze and, in some cases, to provide answers to such questions; we do so in the central part of the paper which contains our main results. We have illustrated an application of our approach in the last part of the paper, where we study which rings $(A,\mc{R}_A)$ have a finite associated zero-divisor graph $\zeta(A,\mc{R}_A)$. Since every artinian ring can be written as finite product of local artinian rings, we choose as a good family of equivalence relations $\{\sim_A \}$; in this  way the associated functor preserves finite products in both directions and this allows us to provide a partial structure theorem for rings $A$ such that
$\zeta(A,\sim_A)$ is finite.

%  \Maurizio{The leitmotif of the first three sections is to search equivalence relations that are a good compromise between having good properties, i.e. functorial properties, and producing as little noise as possible  on zero-divisor graphs, i.e. coarsest relations.
%From this perspective, we could balance in a way that we may call in some sense "variational".
%Indeed we may ask ourselves: if we want that the functor satifies a specific property, which is the more appropriate coarsest (=minimal) equivalence relation? 
%And what happens if it is possible to choose a different type of equivalence relation for each ring?
%We have build definitions to give questions, and in some cases answers too, like these.
%In the last section we have illustrated an application of this approach.
%Since every artinian ring can be written as finite product of local artinian rings, we are tempted to choose a good family of equivalence relations in a such a way that our functor preserves finite products.
%As shown in \cite{DJS21}, the family of associatedness relations is the better choice to study artinian rings, from this perspective.}
  
\smallskip
After reviewing a few preliminaries on graphs in Section \ref{PRE} and the classical definitions of zero-divisor graphs in the literature in Section \ref{ZER}.1, we look at which are the significant properties that an adjacency relation must have to define a zero-divisor graph. We call such relations zero-divisor relations, see Definition \ref{ZDR} and we observe in Proposition \ref{asymp} that a zero-divisor relation $\mc{R}_A$ is just an equivalence relation which is finer than $\asymp_A$. In this way, given a ring $A$ and a zero-divisor relation $\mc{R}_A$, we define a zero-divisor graph $\zeta(A,\mc{R}_A)$. Choosing the right $\mc{R}_A$, by means of $\zeta(A,\mc{R}_A)$ and of its strong quotients we can recover the definitions of the zero-divisor graphs studied so far. We  prove in Theorem \ref{conncompresso} that the compressed zero-divisor graphs associated with $\mc{R}_A$, cf. Definition \ref{ZDG}, are connected independently of the chosen relation, generalizing \cite[Theorem 2.3]{AL99} and \cite[Proposition 1.4]{SW11}, see also \cite{M02}.

Next, we proceed to  make $\zeta$ into a functor from the category of commutative rings $\tf{CRing}$ to that of graphs with loops $\tf{Graph}$; we do this in Section \ref{FUN}.1. We consider each object $A\in\tf{CRing}$ associated with a fixed relation $\mc{R}_A$, i.e. we fix a family $\{\mc{R}_A\}$ of equivalence relations. We define first the key property we need on a family $\{\mc{R}_A\}$ of zero-divisor relations, see Definition \ref{ffr}, and provide some examples. We then define its associated zero-divisor functor $\zeta \,:\, \tf{CRing} \lra \tf{Graph}$,\, $(A,\mc{R}_A) \mapsto \zeta(A,\mc{R}_A)$, see Definition \ref{fzd}.

\noindent We spend the rest of Section \ref{FUN} in investigating the properties of the newly defined functor $\zeta$. We first show under which conditions on $\{\mc{R}_A\}$ the associated functor $\zeta$ preserves binary products and equalizers, see Propositions \ref{cpf} and \ref{ceq}, and conclude that the families $\{=_A\}$, $\{\approx_A\}$ and $\{\sim_A\}$ preserve finite products, cf. Corollary \ref{cpfsim}, whereas $\{=_A\}$ is the only one which preserves equalizers, and thus all finite limits. Corollary \ref{cpfsim} also extends to the case of infinite rings \cite[Proposition 3.4]{DJS21}, see Corollary \ref{djsdjs}.  Section \ref{FUN}.4 contains one of the main results of this paper, Theorem \ref{ThetaA} and its consequence Corollary \ref{totale}, which we call Inversion of Product. There we show under which conditions on $\{\mc{R}_A\}$ the decomposition of $\zeta(A,\mc{R}_A)$ as a finite direct product of graphs leads to a decomposition of $A$ as a finite direct product of rings. This corrects and extends to the case of infinite rings the main result \cite[Theorem 3.6]{DJS21}.

Finally, in the last section, we apply the main results of Section \ref{FUN} to  study the case when $\zeta(A)=\zeta(A,\mc{R}_A)$  is finite, focusing on the case $\mc{R}_A=\sim_A$, which turns out to be of main interest,
as we observe at the beginning of the section. We prove in Theorem \ref{finitetheta} that $\zeta(A)$ is finite exaclty when $A$ is the direct product of an artinian PIR with a finite ring. Moreover, we prove that local artinian PIRs  are characterized as those rings $A$ for which $\zeta(A)$ is a staircase graph. Finally, thanks to the Inversion of Product Theorem, we characterize artinian PIRs as those rings whose $\zeta(A)$ is a finite product of staircase graphs, see Proposition \ref{PIRloc} and Theorem \ref{thm: caratterizzazione PIR}, which extend to the case of infinite rings the main results of \cite{DJS21}, Section 4 and 5.

\section{Preliminaries}\label{PRE}
All the rings we consider are unitary and commutative, and all rings homomorphisms  $\varphi \,:\, A \lra B$ map $1_A$ to $1_B$.  We denote by $\mc{I}_A$ the family of ideals of  $A$ and by  $\mc{PI}_A\subseteq \mc{I}_A$ the subfamily of principal ideals.
By $\mc{D}(A)$ and  $\mc{N}(A)$ we denote the set of zero-divisors and  the set of nilpotents of $A$, respectively. We also let  $\mc{D}^*(A):=\mc{D}(A)\smallsetminus\{0\}$. By $A^\times$ we denote the set of invertible elements of $A$ and  by $\Ann(a)$ the ideal $0:a$, with $a\in A$.

We recall next some definition and notation from Graph Theory, and we refer the reader to \cite{K11} for further details. A {\it graph} $G$ is an ordered couple $(V(G),E(G))$, where $V=V(G)$ is a set,  {\it the vertex set of $G$}  and $E=E(G)$ is a binary symmetric relation on  $V^2$, called {\it the edge set} of $G$.
Given a vertex  $v\in V$, we call $N(v)=\{ w\in V \,:\, (v,w)\in E\}$ {\it the set of adjacents of $v$}; we call {\it degree} of $v$, and denote it by $\deg(v)$, the cardinality of $N(v)$. We say that $v\in V$ {\it has a loop} if $v\in N(v)$ and, when this is the case we call $(v,v)\in E$ a {\it loop}.

Let  $G_1$ and $G_2$ be graphs. A {\it graph (homo)morphism} $\varphi \,:\, G_1\rightarrow G_2$ is a map $V(G_1)\to V(G_2)$ such that $\big((x,y)\in E(G_1)\Rightarrow (\varphi(x),\varphi(y))\in E(G_2)$, for all $x,y\in V(G_1)\big)$. Similarly, a {\it graph comorphism} $\varphi:G_1\rightarrow G_2$ is a map $V(G_1)\to V(G_2)$ such that

$\big((\varphi(x),\varphi(y))\in E(G_2)\Rightarrow (x,y)\in E(G_1)$, for all $x,y\in V(G_1)\big)$.  When $\varphi$ is both a graph morphism and comorphism  $\varphi$ is called {\it strong} or {\it full morphism}. We call a graph morphism $\varphi$  {\it (graph) isomorphism} if it bijective and its inverse is also a graph morphism. Observe that $\varphi$ is an isomorphism if and only if it is a bijective strong  morphism.

Given two graphs $H, G$ with  $V(H)\subseteq V(G)$, we say that $H$ is a  {\it subgraph of $G$} if the inclusion map is a graph morphism, and a subgraph  $H$ of $G$ is called {\it induced} if the inclusion map is a strong morphism. Let now  $\mc{R}$ be an equivalence relation on $V=V(G)$; we call {\it the quotient graph of $G$ by $\mc{R}$}, and we denote it by $G/\mc{R}$, the graph whose vertex set is $V/\mc{R}$ and, given $[x], [y]\in V(G/\mc{R})$, one has $([x], [y])\in E(G/\mc{R})$ if and only  there exist $x'\in [x]$ and $y'\in [y]$ with $(x',y')\in E(G)$, i.e.  $\pi_\mc{R}\,:\, G\longrightarrow G/\mc{R}$ defined by $v\mapsto [v]$ is a graph morphism.  When $\pi_\mc{R}$ is a strong morphism, we say that   $G/\mc{R}$ is a  \emph{strong quotient} of $G$.

We conclude this section by recalling some common operations defined on graphs. 
The \emph{Kronecker product}, or simply \emph{product}, of $G_1$ and $G_2$, denoted by  $G_1\times G_2,$ is  the graph whose vertex set is $V(G_1)\times V(G_2)$ and two couple of vertices  are adjacent if and only if both the first and second components of the couples are adjacent. The  \emph{coproduct} $G_1\sqcup G_2$ of two graphs $G_1$ and $G_2$ is the graph whose vertex set is the disjoint union of the vertices of $G_1$ and $G_2$ and two vertices are adjacent if and only if they are adjacent  in $G_1$ or  in $G_2$, i.e. its edge set is the disjoint union of the edges of both graphs. Finally, given two graph morphisms $\phi,\psi: G_1\to G_2$, the \emph{equalizer} $\Eq(\varphi,\psi)$ is the (strong) subgraph induced by the subset of vertex where $\varphi$ and $\psi$ coincide.

The pair of data given by the class of all graphs and graph morphisms, with composition and identities defined in the natural way, is a category, that will be  denoted by $\tf{Graph}$, with product, coproduct and equalizer defined as above,   cf. \cite[Sections 4.1 and 4.2]{K11}. 

%\begin{remark}\label{oss: generalizzazione digrafi}
%Le nozioni introdotte in questa sezione sono tutte generalizzabili ai \emph{digrafi}: nei digrafi, rispetto ai grafi, gli archi hanno anche una orientazione e non ci sono limitazioni sul numero di archi di ogni coppia di vertici; si veda \cite{K}, Capitolo $1$.
%\end{remark}

\section{Zero-divisor graphs}\label{ZER}
There are various definitions of zero-divisor graphs attached to commutative rings which appear in the literature and we are going to review next some of the most significative. As we did above for graphs, given a ring $A$ and an equivalence relation  $\mc{R}$ on $A$, we shall denote by  $[a]$ the equivalence class of $a$ with respect to  $\mc{R}$. Two elements  $a,b\in A$ are called  \emph{associated}, and we write  $a\sim b$, if $(a)=(b)$. We say that $a,b\in A$ are \emph{strongly associated}, and we write $a\approx b$, if there exists $u\in A^\times$ such that $a=ub$. Finally, we call $a,b\in A$ \emph{equiannihilated}, and we write  $a\asymp b$, if $\Ann(a)=\Ann(b)$. It is immediate to see that  $\sim, \approx$ and $\asymp$ are indeed equivalence relations.

\subsection{Definitions in the literature}\label{DD}
Essentially, a zero-divisor graph attached to a ring $A$ is a graph whose vertex set is  $A/\mc{R}_A$, where $\mc{R}_A$ is an equivalence relation on $A$, together with an adjacency relation.

In \cite{B88},  $\mc{R}_A$ is the trivial relation, i.e. equality;  therefore, Beck's zero-divisor graph $G(A)$ has vertex set $V(G(A))=A$.
Loops are not allowed in $G(A)$; in fact, the adjacency relation  $E=E(G(A))$ is given by
$$(a,b)\in E \text{\,\,if and only if\,\,} a\neq b\,\,\text{and}\,\, ab=0.$$
In \cite{AL99}, the vertex set is just  $\mc{D}^*(A)$ and the resulting graph $\Gamma(A)$ is the induced subgraph of $G(A)$ by $\mc{D}^*(A).$

\noindent
In \cite{M02} the so-called compressed zero-divisor graph $\Gamma_E(A)$ has vertex set  $\mc{D}^*(A)/\asymp$, and the adjacency relation $E$ is defined as
$$\big([a],[b]\big)\in E\,\,\text{if and only if\,\,} a\neq b\,\,\text{and}\,\, ab=0;$$
it is a graph with no loop which is a strong quotient of $\Gamma(A)$.

\begin{center}
\includegraphics[scale=0.3]{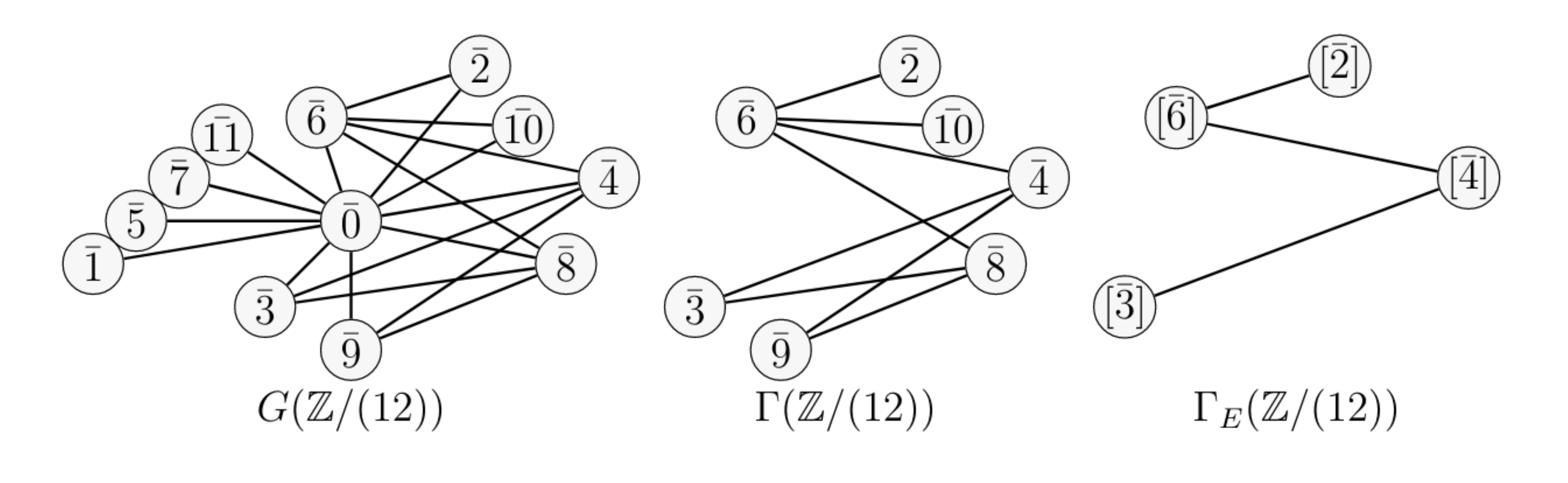}    
\end{center}

In \cite{DJS21}, functorial properties of the compressed zero-divisor graph have been studied in the case of a finite ring $A$, by means of a functor $\Theta$;  the image $\Theta(A)$ results to be a compressed zero-divisor-graph whose vertex set is  $A/\!\approx$ and the adjacency relations $E$ is defined by $$\big([a],[b]\big)\in E\,\,\text{if and only if}\,\, ab=0.$$
At the end of the same paper, the authors propose a modification, apt to treat the general case of infinite rings.

\begin{definition}[\cite{DJS21}, Definition 6.1]\label{DJStheta}
  Let  $A$ be a ring. The \emph{zero-divisor graph on principal ideals} $\Theta(A)$ has vertex set $\mc{PI}_A$ and the  adjacency relation $E$ is defined by  $$\big((a),(b)\big)\in E\,\,\text{if and only if}\,\, ab=0,$$ i.e. two principal ideals are adjacent if the product of their principal generators is zero or, equivalently if their product is the zero ideal.
\end{definition}

\begin{center}
    \includegraphics[scale=0.28]{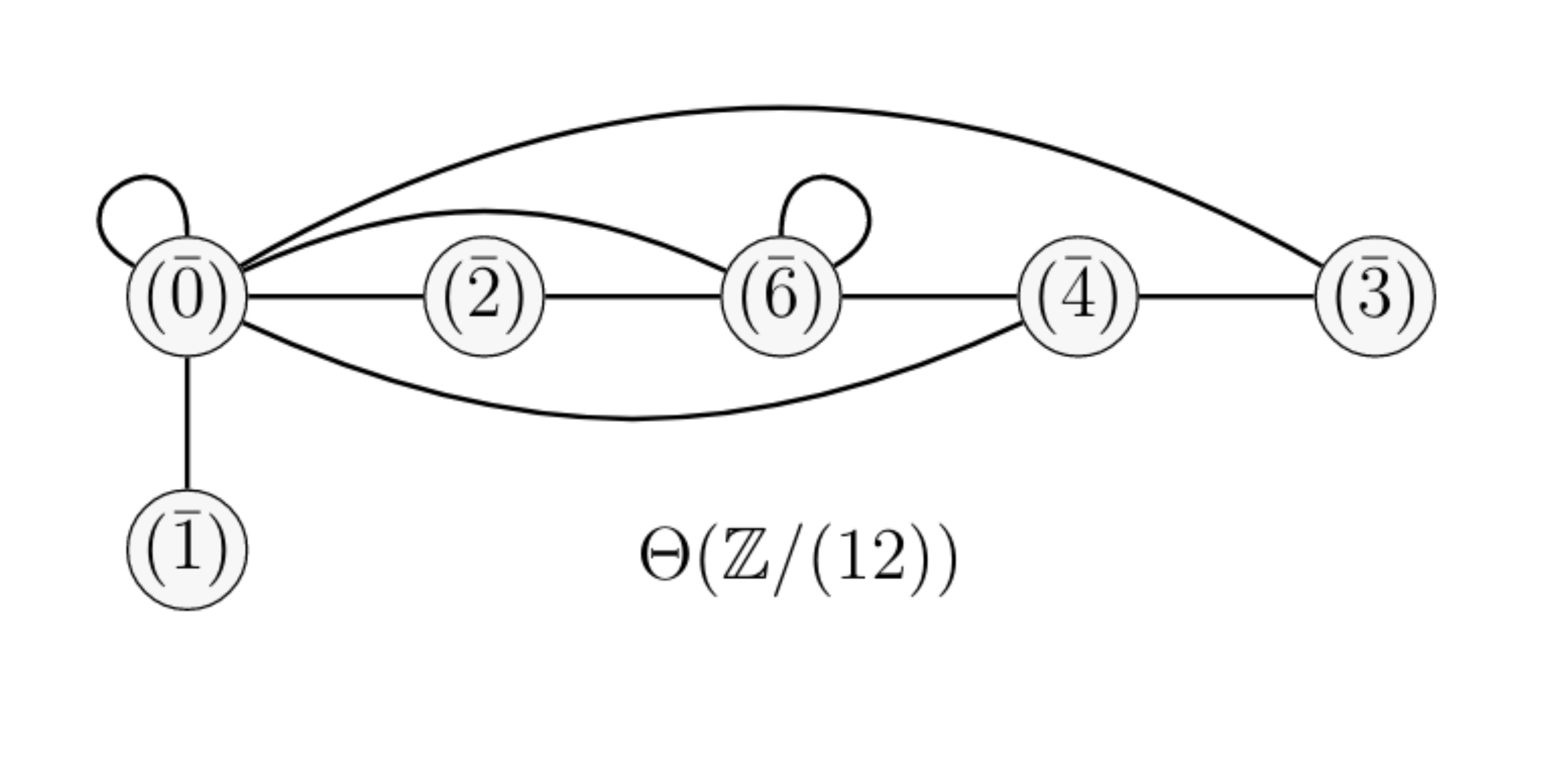}
\end{center}

\begin{comment}
\begin{wrapfigure}{r}{7.85cm}
 \begin{tikzpicture}[scale=1]
    \begin{scope}[>=stealth]
    \node[main node] (0) {$(\bar{0})$};
    \node[main node] (2) [right = 1cm  of 0] {$(\bar{2})$};
    \node[main node] (6) [right = 1cm  of 2] {$(\bar{6})$};
    \node[main node] (4) [right = 1cm  of 6] {$(\bar{4})$};
    \node[main node] (3) [right = 1cm  of 4] {$(\bar{3})$};
    \node[main node] (1) [below = 0.75cm  of 0] {$(\bar{1})$};
    \node (i) at (3.5,-1.5) {$\Theta(\ZZ/(12))$};
    \path[draw,thick]
    (0) edge node {} (1)
    (0) edge node {} (2)
    (0) edge [in=150,out=30] (3)
    (0) edge [in=-155,out=-25] (4)
    (0) edge [in=155,out=25] (6)
    (6) edge node {} (2)
    (6) edge node {} (4)
    (4) edge node {} (3)
    (0) edge [in=150,out=90,looseness=5] (0)
    (6) edge [in=90,out=30,looseness=5] (6);
    \end{scope}
\end{tikzpicture}  
\end{wrapfigure}
\end{comment}

\noindent With a slight abuse of notation, we shall use both $(a)$\, and \, $[a]$ to denote a vertex of $\Theta(A)$, since $(a)\mapsto [a]$ is a 1:1 correspondence between $\mc{PI}_A$\, and\, $A/\sim.$

\noindent Unlike in the other graphs introduced so far,  loops are allowed in  $\Theta(A)$, and this fact is essential in order to provide categorical properties of the functor $\Theta$ for both finite and infinite rings.

\begin{comment}

\begin{figure*}[ht!]
\begin{center}
   \begin{tikzpicture}
    \begin{scope}[xshift=3cm,>=stealth]
    \node[main node] (0) {$(\bar{0})$};
    \node[main node] (2) [right = 1.5cm  of 0] {$(\bar{2})$};
    \node[main node] (6) [right = 1.5cm  of 2] {$(\bar{6})$};
    \node[main node] (4) [right = 1.5cm  of 6] {$(\bar{4})$};
    \node[main node] (3) [right = 1.5cm  of 4] {$(\bar{3})$};
    \node[main node] (1) [right = 1.5cm  of 3] {$(\bar{1})$};
    \path[draw,thick]
    (0) edge [in=-150,out=-30] (1)
    (0) edge node {} (2)
    (0) edge [in=150,out=30] (3)
    (0) edge [in=-155,out=-25] (4)
    (0) edge [in=155,out=25] (6)
    (6) edge node {} (2)
    (6) edge node {} (4)
    (4) edge node {} (3)
    (0) edge [in=150,out=90,looseness=5] (0)
    (6) edge [in=90,out=30,looseness=5] (6);
    \end{scope}
\end{tikzpicture}  
\end{center}

\vspace{-7mm}
\caption{\label{workflow} Grafo di zero-divisori $\Theta(\ZZ/(12))$.}
\end{figure*}

\end{comment}

%\begin{definition}[\textsc{Grafo di zero-divisori compresso e con cappi}]\label{defn: compressed with loops zero divisor graph}
%Sia $A$ un anello. Il grafo $\Gamma^\circ_E(A),$ detto \emph{grafo di zero-divisori compresso e con cappi}, ha come insieme di vertici $\mc{D}^*(A)/\asymp$ e come relazione binaria $E$ definita come $$\big([x],[y]\big)\in E\iff xy=0.$$
%\end{definition}

\subsection{Zero-divisor relations and a new definition}
One of the key observations in order to provide a generalization is that all of the adjacency relations which have been considered so far all share the same ``minimal''  property, which lead us to the following definition.

\begin{definition}\label{ZDR}
  An equivalence relation  $\mc{R}$ on $A$ is called  \emph{zero-divisor relation} if, taken any  $a\mc{R}a'$ e $b\mc{R}b'$, it holds that  $\left(ab=0\Rightarrow a'b'=0\right)$.
\end{definition}

Observe that an equivalence relation $\mc{R}$ is zero-divisor if and only if the adjacency relation $E$ on $A/\mc{R}$, defined by $([a],[b])\in E\!\iff\!ab=0$, is well defined. 
This fact suggests the choice of the name and puts us in a position to provide the announced definition.

\begin{definition}\label{ZDG}
 A \emph{zero-divisor graph on $A$} is a graph whose vertex set is  $A/\mc{R}$, for some zero-divisor relation $\mc{R}$ on $A$, and $[a]$ and $[b]$ are adjacent if and only if $ab=0$; we denote it by $\zeta(A,\mc{R})$.

  Given any subset $X\subset A $, we also let  $\zeta(X,\mc{R})$ be the subgraph of  $\zeta(A,\mc{R})$ induced by $\{[a]\mid a\in X\}$.

\end{definition}

\noindent
When there is no ambiguity, we shall denote $\zeta(A,\mc{R})$, resp. $\zeta(X,\mc{R})$, simply by $\zeta(A)$, resp. $\zeta(X)$. A case of special interest is when $X=\mc{D}^*(A)$,  since  the essential part of a zero-divisor graph is given by the equivalence classes of the elements of  $X$.

By the very definitions, we can capture the graphs $G(A)$,\, $\Gamma(A)$,\, $\Gamma_E(A)$ by means of $\zeta(A,=)$,\, $\zeta(D^*(A),=)$,\, $\zeta(D^*(A),\asymp)$ respectively, removing their loops. Moreover, for any finite ring $A$, we have
$\Theta(A)=\zeta(A,\approx)$

Among all zero-divisor relations on $A$, the relation $\asymp$ turns out to be the coarsest, and this somewhat suggests the appellative compressed of the graph $\Gamma_E(A)$. 
Also, maybe surprisingly, the viceversa holds, i.e. if $\mc{R}$ is finer than the relation $\asymp$, then it is a zero-divisor relation.
We record this fact in the next result. 

\begin{proposition}\label{asymp}
  Let $\mc{R}$ be a an equivalence relation on $A$; then $\mc{R}$ is zero-divisor if and only if $\mc{R}$ is finer than $\asymp$, i.e. for all $a$ and $b$, $a\mc{R}b$ implies $a\asymp b$. 
\end{proposition}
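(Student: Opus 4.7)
The proof splits into two implications, both essentially immediate from the definitions once one chooses the right auxiliary elements. My plan is as follows.

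For the \emph{if} direction, suppose $\mathcal{R}$ is finer than $\asymp$. Take $a\mathcal{R}a'$ and $b\mathcal{R}b'$, so that $\Ann(a)=\Ann(a')$ and $\Ann(b)=\Ann(b')$, and assume $ab=0$. Then $a\in\Ann(b)=\Ann(b')$, hence $ab'=0$, so $b'\in\Ann(a)=\Ann(a')$, which gives $a'b'=0$. This is the required implication defining a zero-divisor relation.

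For the \emph{only if} direction, suppose $\mathcal{R}$ is a zero-divisor relation and $a\mathcal{R}b$. To show $\Ann(a)=\Ann(b)$, pick any $c\in\Ann(a)$, i.e.\ $ca=0$. Using the reflexivity of $\mathcal{R}$, we also have $c\mathcal{R}c$. Applying the zero-divisor property to the pairs $(a,b)$ and $(c,c)$, from $ca=0$ we conclude $cb=0$, so $c\in\Ann(b)$. The reverse inclusion follows by symmetry, yielding $a\asymp b$.

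Neither direction seems to present a real obstacle: the key trick is simply, in the nontrivial direction, to use the reflexive pair $c\mathcal{R}c$ in order to invoke the defining property of zero-divisor relations with one ``variable'' held fixed. The argument is completely general and requires no hypothesis on the ring $A$ (finiteness, Noetherianity, etc.), which is precisely why the proposition characterizes the maximally coarse zero-divisor relation as $\asymp$.
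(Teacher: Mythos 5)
Your proof is correct and follows essentially the same route as the paper's: the nontrivial direction uses the reflexive pair $c\mathcal{R}c$ together with $a\mathcal{R}b$ exactly as in the paper, and the converse is the same two-step substitution through the equal annihilators (you merely replace $b$ by $b'$ first where the paper replaces $a$ by $a'$ first). No gaps.
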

\begin{proof}
Let us take $a\mc{R}b$, for a given zero-divisor relation $\mc{R}$ on $A$ and, by symmetry, it is sufficient to  prove that $\Ann(a)\subseteq\Ann(b)$.
Given any $c\in\Ann(a)$, we have that  $ca=0$ and, since  $a\mc{R}b$ and $c\mc{R}c$, it follows that  $cb=0$.\\
Viceversa, if $a\mc{R}a'$ and $b\mc{R}b'$  for a finer relation than $\asymp$ then $\Ann(a)=\Ann(a')$ and $\Ann(b)=\Ann(b')$. So, if $ab=0$, then $b\in\Ann(a)=\Ann(a')$, i.e. $a'b=0$; therefore $a'\in\Ann(b)=\Ann(b')$, i.e. $a'b'=0$.
\end{proof}

%\begin{example}\label{ese: sim neq asymp}
%In generale $\sim,\asymp$ non definiscono la stessa relazione: basta considerare $A=\ZZ$ e osservare che $\Ann(2)=\Ann(3)=0$ ma $2\not\sim 3;$ oppure considerare $A=\ZZ\times\ZZ$ e notare che $\Ann((0,1))=\ZZ\times\{0\}=\Ann((0,2))$ ma $((0,1))\supset((0,2)).$
%\end{example} 

A consequence of the previous proposition is that, for all zero-divisor relations on $A$, it holds that $[0]=\{0\}$, because $A=\Ann(0)=\Ann(a)$ implies $a=1\cdot a=0$. It is also easy to see that $=$, $\approx$ and $\sim$ are zero-divisor relations, because $a=a'$ implies $a\approx a'$, which in turn yields  $(a)=(a')$ and, thus,   $\Ann(a)=\Ann(a')$ for all $a,a'\in A.$\\
Also observe that, in general,  $\sim$ is not the same as $\asymp$; take for instance the ring $\ZZ$, where we have $2\asymp 1$ but $(2)\neq (1)=\ZZ.$

\begin{comment}
    When a zero-divisor relation $\mc{R}$ also satisfies the \emph{monoidal propriety} $\big($ $a\mc{R}a',b\mc{R}b'\Rightarrow ab\mc{R}a'b'$ $\big)$, then the product of equivalence classes  $[a]\cdot[b]:=[ab]$  is well defined, and it makes  $A/\mc{R}$ into a commutative monoid.
\begin{example}
  It is easy to see that, for any ring $A$, the relations $\sim$ and $\asymp$ have the monoidal property, which is not true in general. Take for instance $A=\ZZ$ and define  $n\mc{R}m$ if and only if  $n,m\in\{1,2\}$ or $n,m\in\{0\}$ or  $n,m\in\ZZ\smallsetminus\{0,1,2\}$. Then $\mc{R}$ is  an equivalence relation, which is zero-divisor, but $1\mc{R}2$ whereas $1\cdot 1=1$ is not in the same equivalence class as  $4=2\cdot 2$.
\end{example}
\end{comment}  

\begin{remark}\label{coarse&quot}
Observe that, given zero-divisor relations $\mc{R}, \mc{R}'$ on $A$ such that $\mc{R}$ is finer than $\mc{R}'$, then $\zeta(A,\mc{R}')$ is a strong quotient of $\zeta(A,\mc{R})$. 
\end{remark}

Many known facts about zero-divisor graphs can be generalized in terms of $\zeta$, as for instance the following, cf. \cite{AL99} Theorem 2.3, \cite{M02}, \cite{SW11} Proposition 1.4; see also \cite{SZ22} for more results of this type.

  \begin{theorem}\label{conncompresso}
Let  $\mc{R}$ be a  zero-divisor relation on $A$; then the subgraph $\zeta(\mc{D}^*(A),\mc{R})$, when not empty, is connected with diameter $\leq 3$.
\end{theorem}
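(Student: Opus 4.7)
The plan is to adapt the classical Anderson--Livingston argument \cite{AL99} to the level of $\mc{R}$-equivalence classes, exploiting Proposition \ref{asymp} to the fullest. That proposition implies that membership in $\mc{D}^*(A)$ depends only on the $\mc{R}$-class, so given distinct vertices $[a], [b]$ of $\zeta(\mc{D}^*(A),\mc{R})$ I may choose representatives $a, b \in \mc{D}^*(A)$ unambiguously and seek a path of length at most $3$ between them.

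If $ab = 0$ the vertices are adjacent and nothing more is needed, so I assume $ab \neq 0$. Since $a$ and $b$ are nonzero zero-divisors, the plan is to pick $x, y \in A \setminus \{0\}$ with $ax = by = 0$ and split into subcases. First, if $ay = 0$ (or symmetrically $bx = 0$), I propose the length-$2$ path $[a] - [y] - [b]$ (respectively $[a] - [x] - [b]$). Assuming instead $ay \neq 0$ and $bx \neq 0$, I look at the product $xy$: when $xy \neq 0$, the element $xy$ lies in $\mc{D}^*(A)$ and is annihilated by both $a$ and $b$, giving the length-$2$ path $[a] - [xy] - [b]$; when $xy = 0$, I use the length-$3$ path $[a] - [x] - [y] - [b]$.

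The routine part is producing the edges, since in each case they correspond to products equal to zero that are visible by construction. The main obstacle, and the point at which Definition \ref{ZDR} is essential, is the verification that in each of these walks the intermediate classes are genuinely distinct from each other and from $[a]$ and $[b]$, so that I have an honest path rather than a shorter one. Any candidate collapse will be ruled out via the defining implication of a zero-divisor relation, i.e.\ $\bigl(u\mc{R}u',\, v\mc{R}v',\, uv = 0 \Rightarrow u'v' = 0\bigr)$, by reducing it to one of the already-excluded equalities $ab = 0$, $ay = 0$, or $bx = 0$. For instance, in the length-$3$ path, $[x] = [a]$ together with $xy = 0$ forces $ay = 0$, while $[y] = [b]$ together with $xy = 0$ forces $bx = 0$; the remaining possible collisions $[x]=[b]$, $[y]=[a]$, $[x]=[y]$, $[xy]=[a]$, $[xy]=[b]$ are handled in the same mechanical way. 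Once this bookkeeping is complete, the bound $\diam \leq 3$ and the connectedness of $\zeta(\mc{D}^*(A),\mc{R})$ follow at once.
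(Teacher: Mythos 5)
Your proof is correct, but it takes a genuinely different route from the paper's. The paper's argument is a two-line reduction: it first strips the loops from $\zeta(\mc{D}^*(A),\mc{R})$ (noting this changes neither connectedness nor diameter), then observes via Remark \ref{coarse&quot} that the resulting graph is a strong quotient of the classical Anderson--Livingston graph $\Gamma(A)$ --- since $=$ is a zero-divisor relation finer than $\mc{R}$ --- and invokes \cite[Theorem 2.3]{AL99} together with the fact that a surjective graph morphism maps paths to walks and hence cannot increase distances. You instead re-run the Anderson--Livingston path construction directly at the level of $\mc{R}$-classes: your case split ($ab=0$; else pick $x,y$ with $ax=by=0$ and branch on $ay$, $bx$, $xy$) is exactly the classical one, and your use of the defining implication of Definition \ref{ZDR} to reduce every potential collision among $[a],[b],[x],[y],[xy]$ to one of the excluded equalities $ab=0$, $ay=0$, $bx=0$ is sound (e.g.\ $[x]=[y]$ with $by=0$ forces $bx=0$, so even that collapse is excluded). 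Your preliminary remark that membership in $\mc{D}^*(A)$ is an $\mc{R}$-class invariant, via Proposition \ref{asymp} and $[0]=\{0\}$, is also the right thing to say to make the induced subgraph and the choice of representatives unambiguous. What each approach buys: the paper's proof is shorter and delegates all the combinatorics to the cited theorem, at the cost of relying on the quotient formalism; yours is self-contained, never needs $\Gamma(A)$ or the quotient map, and handles the presence of loops transparently since the path argument is insensitive to them.
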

\begin{proof}
  Consider the graph $G$ which has the same vertex set as  $\zeta(\mc{D}^*(A),\mc{R})$ and with edge set obtained by $E(\zeta(\mc{D}^*(A),\mc{R}))$ by removing all the loops.
  Observe that, without loss of generality we can prove the statement for $G$, since adding loops does not ruin connectedness nor affects the diameter. Since $G$ is a strong quotient of $\Gamma$ by Remark \ref{coarse&quot}, the conclusion follows from \cite{AL99}, Theorem 2.3.
\end{proof}

\section{Zero-divisor functors}\label{FUN}
We denote by \textbf{CRing} the category of commutative rings and we recall that the  categorical product in \textbf{CRing} is the direct product of rings, whereas the categorical coproduct  is given by the underlying tensor product of abelian groups, equipped with its canonically induced commutative ring structure.

In this section we generalize to infinite rings the ideas of  \cite{DJS21}, and prove some properties of the map $(A,\mc{R})\mapsto \zeta(A,\mc{R})$ that make it a functor from the category $\tf{CRing}$ to the category $\tf{Graph}$, by fixing beforehand a family of equivalence relations indexed by the class of all rings.  Some conditions on the family of relations we consider are needed, for $\zeta$ to behave well on ring homomorphisms. 

\subsection{Functorial families of equivalence relations}
Let now $\{\mc{R_A}\}$, where $A$ varies in $\tf{CRing}$, denote a collection of equivalence relations.

\begin{definition}\label{ffr} We say that $\{\mc{R_A}\}$ is a \emph{functorial family} in $\tf{CRing}$ if, for all morphisms  $f \,:\, A\rightarrow B$ in $\tf{CRing}$, it holds that  $a\mc{R}_Aa'\Rightarrow f(a)\mc{R}_Bf(a')$.
\end{definition}

A collection of equivalence relations is thus functorial in \tf{CRing}  if every ring homomorphism maps equivalence classes to equivalence classes. The appellative functorial in the previous definition is motivated by the following fact.

\begin{proposition}\label{vertexfunct}
Let $\{\mc{R}_A\}$ be a family of equivalence relations; then $\{\mc{R}_A\}$ is functorial in \tf{CRing} if and only if there exists a functor  $F:\tf{CRing}\rightarrow\tf{Set}$ such that $F(A)=A/\mc{R}_A$  and  $F(f)([a])=[f(a)]$ for all rings $A$ and homomorphisms $f \,:\, A\rightarrow B$.
\end{proposition}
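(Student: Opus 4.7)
The plan is to prove the two implications by unwinding the definitions, verifying well-definedness on the one hand, and reading off the functorial property on the other.

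For the forward implication, suppose $\{\mc{R}_A\}$ is a functorial family. I would define $F$ on objects by $F(A):=A/\mc{R}_A$ and, for a ring homomorphism $f\:A\to B$, attempt to define $F(f)\:A/\mc{R}_A\to B/\mc{R}_B$ by the required formula $[a]\mapsto [f(a)]$. The first thing to check is that this map is well defined: if $a\mc{R}_A a'$, then by functoriality of the family we have $f(a)\mc{R}_B f(a')$, so $[f(a)]=[f(a')]$ in $B/\mc{R}_B$. Next I would verify the two functor axioms. Preservation of identities is immediate, since $F(\id_A)([a])=[\id_A(a)]=[a]$. For composition, given $f\:A\to B$ and $g\:B\to C$, one has $F(g\circ f)([a])=[(g\circ f)(a)]=[g(f(a))]=F(g)([f(a)])=F(g)(F(f)([a]))=(F(g)\circ F(f))([a])$, so $F(g\circ f)=F(g)\circ F(f)$.

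For the converse, suppose such a functor $F$ exists. Given any ring homomorphism $f\:A\to B$ and any pair $a,a'\in A$ with $a\mc{R}_A a'$, we have $[a]=[a']$ in $A/\mc{R}_A=F(A)$. Applying $F(f)$ and using the prescribed formula yields $[f(a)]=F(f)([a])=F(f)([a'])=[f(a')]$ in $B/\mc{R}_B=F(B)$, which is exactly saying that $f(a)\mc{R}_B f(a')$. Hence $\{\mc{R}_A\}$ is functorial in the sense of Definition \ref{ffr}.

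There is no substantial obstacle here, as the proposition is essentially a reformulation: the condition $a\mc{R}_A a' \Rightarrow f(a)\mc{R}_B f(a')$ is precisely the assertion that the assignment $[a]\mapsto [f(a)]$ descends to a well-defined function on equivalence classes, and once descent is granted, the functoriality axioms follow tautologically from those of the identity and composition in $\tf{CRing}$. The only point requiring slight care is keeping track of which side of the equivalence (the well-definedness of $F(f)$ or the hypothesis on the family) is being used in each direction; the rest is formal.
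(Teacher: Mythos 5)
Your proof is correct and follows essentially the same route as the paper's: the paper likewise observes that $F(f)$ is well defined precisely when the family is functorial, and that the identity and composition axioms follow directly from the formula $F(f)([a])=[f(a)]$. You have merely written out both implications in more detail; no issues.
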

\begin{proof}
The map $F(f)$ is well-defined if and only if $\{\mc{R}_A\}$ is functorial. The equalities $F(\id_A)=\id_{F(A)}$ and $F(f\circ g)=F(f)\circ F(g)$ are a direct consequence of the definition of $F(f)$.
\end{proof}

\begin{example}\label{ffexamp}
The following  are examples of functorial families of relations.

\begin{enumerate}[1.]
\item  $\{\mc{R}_A\}=\{=_A\}$, the \emph{trivial} functorial family;

\item  $\{\mc{R}_A\}=\{\approx_A\}$, since,  if $a_1=ua_2$ where $u\in A^\times$, then $f(a_1)=f(u)f(a_2)$ and $f(u)$ is a unit for every ring homomorphism $f$;

  \item  $\{\mc{R}_A\}=\{\sim_A\}$, since, if $(a_1)=(a_2)$ then $(f(a_1))=(f(a_2))$ holds, for all homomorphisms $f$.
\end{enumerate}
\end{example}

\begin{example}\label{ffexample2}
1. Let $\{\mc{R}_A\}$ be a family of equivalence relations such that $A/\mc{R}_A$ has only one equivalence class for each ring $A$; then $\{\mc{R}_A\}$ is a functorial family, which we call \emph{totally degenerate}.

\smallskip
\noindent
2. The family $\{\mc{R}_A\}=\{\asymp_A\}$ is not functorial. To this end, take  the  projection  $\pi \,:\,\ZZ\to\ZZ/(2)$, $n \mapsto \bar{n}$; we then have $\Ann(\bar{1})=\Ann(\bar{2})=0$ but $\Ann(\bar{1})=0\neq\ZZ/(2)=\Ann(\bar{2})$.
\end{example}

All the families of Example \ref{ffexamp} are in a sense tame, since they consist essentially of the same relation defined on all of the rings. In general, things can be more intricated, as in the following example.

\begin{example}\label{ffexample3}
Consider $\{\mc{R}_A\}=\{\sim_A\}$ and $\{\mc{R}'_A\}=\{\approx_A\}$, which are  functorial as we have seen in Example \ref{ffexamp} and notice that, for all $A$, we have $\mc{R}'_A$ is finer than $\mc{R}_A$. Also observe that the set of nilpotents $\mc{N}(A)$ can be partioned as $\mc{N}(A)=\bigsqcup_{a\in\mc{N}(A)}[a]_{\mc{R}_A}$.  We define now the family $\{\mc{R}''_A\}$ as follows: for all $A\in\tf{CRing}$, $a,a'\in A$ we let  $$a\mc{R}''_A a' \text{ if and only if } \big(a\mc{R}_Aa' \text{ when } a,a'\in\mc{N}(A) \text{  and } a\mc{R}'_Aa' \text{  otherwise}\big).$$
  Since $\{\mc{R}'_A\}$ is finer than $\{\mc{R}_A\}$ and all homomorphism $f:A\to B$ are such that  $f(\mc{N}(A))\subseteq\mc{N}(B)$, the family $\{\mc{R}''_A\}$ is a functorial family too. 
\end{example}

We say that a family of equivalence relations $\{\mc{R}_A\}$ is \emph{degenerate} if there exists a ring $A$ such that $[0]\neq\{0\}$ in $A/\mc{R}_A$ and {\it non degenerate} otherwise.

\begin{proposition}\label{finestblend}
Every non degenerate functorial family of equivalence relations $\{\mc{R}_A\}$ is such that  $\big(a\mc{R}_Ab\Rightarrow a\sim_A b$, for all $A\in\tf{CRing}\big)$, i.e. each $\mc{R}_A$ is finer that $\sim_A$.
\end{proposition}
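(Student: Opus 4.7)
The plan is to exploit functoriality by passing to a suitable quotient ring, turning the question of whether $b \in (a)$ into a statement about the equivalence class of $0$.

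First I fix a ring $A$ and elements $a,b \in A$ with $a \mc{R}_A b$, and I aim to show $(a) = (b)$. By symmetry in $a$ and $b$ it suffices to prove $b \in (a)$, since the argument for $a \in (b)$ will be identical. Consider the quotient ring $B := A/(a)$ together with the canonical projection $\pi \,:\, A \lra B$, which is a morphism in $\tf{CRing}$. Since $\{\mc{R}_A\}$ is a functorial family, Definition \ref{ffr} gives $\pi(a) \mc{R}_B \pi(b)$. But $\pi(a) = 0$ in $B$, so $0 \mc{R}_B \pi(b)$, i.e. $\pi(b) \in [0]_{\mc{R}_B}$.

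Now I invoke non-degeneracy. The family $\{\mc{R}_A\}$ is non-degenerate, meaning that for \emph{every} ring in $\tf{CRing}$ the equivalence class of $0$ is the singleton $\{0\}$; in particular this applies to $B$, so $[0]_{\mc{R}_B} = \{0\}$. Therefore $\pi(b) = 0$, which is exactly $b \in (a)$. Running the same argument with the roles of $a$ and $b$ exchanged (i.e. passing to $A/(b)$) yields $a \in (b)$, and hence $(a) = (b)$, that is, $a \sim_A b$.

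I do not foresee any serious obstacle here; the only thing to be careful about is that non-degeneracy is a property that must be applied to the auxiliary ring $B = A/(a)$ rather than to $A$ itself, and that functoriality is what transports the relation $a \mc{R}_A b$ across $\pi$. Both ingredients are provided directly by Definitions \ref{ffr} and by the notion of non-degeneracy introduced just before the statement.
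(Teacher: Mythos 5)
Your proof is correct and follows exactly the paper's own argument: project onto $A/(a)$, use functoriality to transport $a\,\mc{R}_A\,b$ to $\bar a\,\mc{R}_{A/(a)}\,\bar b$, and then use non-degeneracy of the quotient ring to conclude $\bar b=\bar 0$, i.e. $(b)\subseteq (a)$, finishing by symmetry.
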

\begin{proof}
For every ring $A$ and elements $a,b\in A$, consider the projection of $A$ onto $A/(a)$; since  $\{\mc{R}_A\}$ is functorial we have $\bar{a}\mc{R}_{A/(a)}\bar{b}$ and $\bar{a}=\bar{0}$; thus, from $[0]=\{0\}$ we get that $(b)\subseteq (a)$. By symmetry we also obtain $(a)\subseteq (b)$, as desired. 
\end{proof}

\begin{remark}
Unlike the case of zero-divisor relations, see  Proposition \ref{asymp}, the converse of the above proposition does not hold.  Take for istance the family $\{\mc{R}_A\}=\{=_A: A\in\tf{CRing}\smallsetminus\ZZ\}\cup\{\sim_\ZZ\}$ and the inclusion homomorphism $\ZZ\subseteq\QQ$. Since  $1\sim_\ZZ\!-\!1$ and $1\neq_\QQ\!-\!1$, we may conclude that  $\{\mc{R}_A\}$ is not functorial.
\end{remark}

\subsection{Definition of zero-divisor functors}
With the above definitions, by taking a \emph{zero-divisor functorial family} $\{\mc{R}_A\}$, i.e. a functorial family such that $\mc{R}_A$ is a zero-divisor relation for every $A\in\tf{CRing}$, we are now in a position to define a functor $\zeta$ that generalizes $\Theta$.  Notice that all the functorial families in the Example \ref{ffexamp} are zero-divisor.

\begin{proposition}\label{fzd}
 Let $\{\mc{R}_A\}$ be a zero-divisor functorial family in $\tf{CRing}$; then, there exists a functor $\zeta \,:\,\tf{CRing}\rightarrow\tf{Graph}$ such that $\zeta(A)$ is the zero-divisor graph $\zeta(A,\mc{R}_A)$ and $\zeta(f)([a])=[f(a)]$ for all $f \,:\,A\rightarrow B$.\\
We call $\zeta$  the \emph{zero-divisor functor associated to $\{\mc{R}_A\}$}. 
\end{proposition}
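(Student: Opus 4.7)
The plan is to build $\zeta$ by combining the set-level functor from Proposition \ref{vertexfunct} with an adjacency check that comes for free from the definition of a ring homomorphism. On objects, I simply set $\zeta(A) := \zeta(A,\mc{R}_A)$, which is a well-defined graph because $\mc{R}_A$ is assumed to be a zero-divisor relation (see the remark after Definition \ref{ZDR}). On morphisms $f \,:\, A \to B$, I define $\zeta(f)\colon V(\zeta(A)) \to V(\zeta(B))$ by $[a] \mapsto [f(a)]$.

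The first step is to verify that $\zeta(f)$ is well-defined on the vertex set. This is exactly the content of Proposition \ref{vertexfunct}: since $\{\mc{R}_A\}$ is a functorial family, $a \,\mc{R}_A\, a'$ forces $f(a) \,\mc{R}_B\, f(a')$, so $[f(a)] = [f(a')]$ in $B/\mc{R}_B$. The second step is to check that $\zeta(f)$ respects adjacency, i.e.\ is a graph morphism in the sense of Section \ref{PRE}. Suppose $([a],[b]) \in E(\zeta(A))$; then by the definition of $\zeta(A,\mc{R}_A)$ there exist (equivalently, for every) representatives with $ab = 0$, and applying $f$ gives $f(a)f(b) = f(ab) = f(0) = 0$, which is exactly the condition for $([f(a)],[f(b)]) \in E(\zeta(B))$. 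Loops present no separate issue because the adjacency criterion is symmetric in $a$ and $b$ and allows $a=b$.

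The third step is to verify the two functoriality axioms. Both $\zeta(\id_A) = \id_{\zeta(A)}$ and $\zeta(g \circ f) = \zeta(g) \circ \zeta(f)$ are inherited directly from the set-level functor $F$ produced in Proposition \ref{vertexfunct}, since $\zeta$ and $F$ agree on vertex sets and on the action on vertices; the graph-morphism property proved in the previous step ensures that these set-level identities lift to identities of graph morphisms.

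I do not foresee a real obstacle here: the content of the statement is mostly bookkeeping, and the one nontrivial ingredient, namely that the image of a zero-divisor pair under a ring homomorphism is again a zero-divisor pair, is built into the fact that $f(0)=0$ and $f$ preserves products. The only place where one needs to be careful is to ensure that $\zeta(f)$ does not secretly depend on the choice of representative $a \in [a]$; but this is precisely guaranteed by the functoriality hypothesis on $\{\mc{R}_A\}$, without which the construction would fail.
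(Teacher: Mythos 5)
Your proposal is correct and follows essentially the same route as the paper: well-definedness of $\zeta(f)$ on vertices via Proposition \ref{vertexfunct}, preservation of adjacency from $f(ab)=f(a)f(b)=0$, and the functor axioms inherited from the set-level functor. The paper's proof is just a more compressed version of the same argument.
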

\begin{proof}
  Since  $\mc{R}_A$ is a zero-divisor relation, $\zeta(A)$ is a zero-divisor graph and, since $\{\mc{R}_A\}$ is functorial, by Proposition \ref{vertexfunct} the map $\zeta(f): V(\zeta(A))\rightarrow V(\zeta(B))$ is well-defined. Finally, it is easy to verify that $\zeta(f)$ is a graph morphism, because $ab=0$ implies $0=f(ab)=f(a)f(b)$.
\end{proof}

\noindent The functor $\zeta$ we just defined by means of Proposition \ref{fzd} generalizes the functor $\Theta$ defined in \cite[Definition 3.2]{DJS21}, over the category of finite rings, and \cite[Definition 6.1]{DJS21}, see also Definition \ref{DJStheta}. In fact, if we choose $\{\mc{R}_A\}=\{\sim_A\}$, we will have that $\zeta=\Theta$.

\begin{remark}\label{osc zdff}
By Proposition \ref{asymp}, a zero-divisor family $\{\mc{R}_A\}$ is non degenerate, since $[0]=\{0\}$ when $\mc{R}_A=\asymp_A$. Thus, by Proposition \ref{finestblend}, all  zero-divisor functorial families range  between $\{=_A\}$, which is the finest, and $\{\sim_A\}$, which is the coarsest. In particular,  the family  $\{\sim_A\}$ is the one that compresses the most each of the zero-divisor graphs $\zeta(A)$.  
\end{remark}

\begin{example}\label{usefulconsequence}
Clearly, if $f$ is an isomorphism in $\tf{CRing}$, then $\zeta(f)$ is an isomorphism in $\tf{Graph}$; therefore the rings $A=\ZZ/(4)[x]/(2x,x^2)$ and $B=\ZZ/(4)[x]/(2x,x^2-2)$ are not isomorphic, since  $\zeta(A,\sim_A)$ has five elements and $\zeta(B,\sim_B)$ has four.
\end{example}

\begin{comment}
  Forse è meglio usare gli stessi anelli con la relazione di uguaglianza? Sono entrambi anelli locali di 8 elementi: qui il grado dei vertici di $\zeta(-,=)$ gioca un ruolo pi\'u interessante e anche il numero di vertici con cappi.
\end{comment}

Recall that $f \,:\, X\to Y$ is a {\it monomorphism} in a category \textbf{C} when  for every object $Z$ and every pair of morphisms $g_1, g_2 \,:\, A\to X$ it holds that $\big((fg_1=fg_2)\Rightarrow (g_1=g_2)\big)$;\, similarly, $f$ is an epimorphism in  \textbf{C} when for every object $Z$ and every pair of morphisms $g_1,g_2 \,:\, Y\to Z$ then $\big((g_1f=g_2f)\Rightarrow (g_1=g_2)\big)$.
In general, if $f$ is a monomorphism, resp. an epimorphism, in $\tf{CRing}$ then,  $\zeta(f)$ is not a monomorphism, resp.  an epimorphism of graphs.
For instance, consider again as $f$ the inclusion of $\ZZ$ in $\QQ$ which is  mono and epi in \tf{CRing} and take the functorial family $\{\sim_A\}$. Then $\zeta\,:\, \zeta(\ZZ,\sim)\lra \zeta(\QQ,\sim)$ is a morphism from an infinite graph to a graph with two vertices, which is  not a monomorphism in \tf{Graph}. Instead, if we take the family $\{=_A\}$, then  the morphism $\zeta(f)\,:\, \zeta(\ZZ,=) \lra \zeta(\QQ,=)$ is not an epimorphism in  \tf{Graph}.  
Furthermore, note that in general $\zeta$ does not reflects isomorphisms in $\tf{CRing}$ and, thus, neither mono nor epimorphisms. For instance, consider the family $\{\sim_A\}$ and the inclusion $\QQ\subseteq \QQ(\sqrt{2})$. By applying $\zeta$ we obtain an isomorphism of graphs, since $\QQ$ and $\QQ(\sqrt{2})$ are both fields,  but $\QQ(\sqrt{2})\not\simeq \QQ$ in \tf{CRing}.

We conclude this part by observing that, in general, the functor $\zeta$ is neither faithful nor full. Consider the family $\{\sim_A\}$ and the identity and conjugation automorphisms of $\CC$. Then, their images by $\zeta$ are equal; also,  not all graphs morphisms $\zeta(\CC) \lra \zeta(\CC)$ are images of rings morphisms, for  there is  the trivial graph morphism $\zeta(\CC) \to \zeta(\CC)$ that maps to $\tf{0}$  but, on the other hand,  no ring homomorphism $f\,:\,\CC \lra \CC$ is such that $f(1)=0$. 

\subsection{Finite products and finite limits. Coproducts.} Let us fix a zero-divisor functorial family $\{\mc{R}_A\}$. We are going to study next under which conditions its associated zero-divisor functor $\zeta$ preserves finite limits. To this end, it is sufficient to understand when $\zeta$ preserves the terminal object, binary products and equalizers, cf. for instance \cite{B94}, Proposition 2.8.2.

Clearly, independently of the choice of the family, $\zeta(\tf{0})$ is isomorphic to the graph composed by a single vertex with a loop, which is the terminal object in $\tf{Graph}$.

Consider now $A,B\in\tf{CRing}$, the projections $A\times B\to A$,\,\, $A\times B\to B$ and  the induced graph morphisms  $\zeta(A\times B)\to \zeta(A)$, $\zeta(A\times B)\to \zeta(B)$. By the universal property of the product in \tf{Graph}, there exists an unique canonical graph morphism $\phi\,:\,\zeta(A\times B)\to \zeta(A)\times \zeta(B)$.

\begin{proposition}\label{cpf}
  Let  $A, B$ be rings; then
  \begin{enumerate}[1.]
  \item  $\phi\, :\, \zeta(A\times B)\lra \zeta(A)\times\zeta(B)$ is a strong graph epimorphism;
    \item
     if $\{\mc{R}_A\}$ also satisfies the following property
     $$a\mc{R}_Aa', b\mc{R}_Bb' \Longrightarrow (a,b)\mc{R}_{A\times B}(a',b') \text{\,  for all\,}  a, a'\in A \text{\,  and\,}  b, b'\in B,$$ for all $A, B$ in $\textbf{CRing}$,  then $\phi$ is an isomorphism.
  \end{enumerate}
\end{proposition}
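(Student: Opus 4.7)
The plan is to unpack the definition of $\phi$ explicitly: on vertices it sends the class $[(a,b)] \in V(\zeta(A\times B))$ to the pair $([a],[b]) \in V(\zeta(A))\times V(\zeta(B))$, and then to verify separately surjectivity, the strong (comorphism) property, and, under the extra hypothesis, injectivity.

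For part 1, surjectivity on vertices is immediate: given $([a],[b])$, the vertex $[(a,b)]$ of $\zeta(A\times B)$ maps to it. The map $\phi$ is automatically a graph morphism by the universal property that defines it, so only the comorphism property requires work. Suppose $([a_1],[b_1])$ and $([a_2],[b_2])$ are adjacent in $\zeta(A)\times \zeta(B)$; by the very definition of Kronecker product this means $a_1a_2=0$ in $A$ and $b_1b_2=0$ in $B$, hence $(a_1,b_1)(a_2,b_2)=(0,0)$ in $A\times B$, so $[(a_1,b_1)]$ and $[(a_2,b_2)]$ are adjacent in $\zeta(A\times B)$. Here I use crucially that $\mc{R}_{A\times B}$ is a zero-divisor relation, so adjacency of two classes can be tested on any choice of representatives, which I implicitly invoke when I fix $(a_i,b_i) \in [(a_i,b_i)]$ as the representatives given by the components $[a_i], [b_i]$ selected in the image. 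This gives the strong epimorphism claim.

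For part 2, the only remaining ingredient is injectivity of $\phi$ on vertices. Assume $\phi([(a_1,b_1)]) = \phi([(a_2,b_2)])$, i.e.\ $[a_1]=[a_2]$ in $A/\mc{R}_A$ and $[b_1]=[b_2]$ in $B/\mc{R}_B$, so $a_1\mc{R}_A a_2$ and $b_1\mc{R}_B b_2$. The additional hypothesis then yields $(a_1,b_1)\mc{R}_{A\times B}(a_2,b_2)$, so $[(a_1,b_1)]=[(a_2,b_2)]$ in $\zeta(A\times B)$. Thus $\phi$ is a bijective strong morphism, hence an isomorphism of graphs, as recorded in the preliminaries.

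The main obstacle, such as it is, is psychological rather than technical: one must avoid the temptation to try to \emph{lift} an arbitrary class in $(A\times B)/\mc{R}_{A\times B}$ to a product of classes using just the functoriality of $\{\mc{R}_A\}$. Functoriality along the projections only tells us that $(a,b)\mc{R}_{A\times B}(a',b')$ implies $a\mc{R}_A a'$ and $b\mc{R}_B b'$; the reverse implication is exactly the extra hypothesis imposed in part 2, and without it $\phi$ may genuinely collapse distinct classes, so the strong epimorphism of part 1 is the most one can say in full generality.
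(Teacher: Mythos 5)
Your proof is correct and follows essentially the same route as the paper: the paper likewise observes that $[(a,b)]$ and $[(a',b')]$ are adjacent in $\zeta(A\times B)$ if and only if $aa'=0$ and $bb'=0$, i.e.\ if and only if the images are adjacent in $\zeta(A)\times\zeta(B)$, giving the strong surjective morphism, and dismisses part 2 as immediate (it is exactly the injectivity you spell out). Your closing remark correctly identifies why the extra hypothesis is needed and is a useful clarification of what the paper leaves implicit.
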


\begin{proof}
  1. By the very definition, $\phi[(a,b)]=([a],[b])$. Observe that $[(a,b)]$ and $[(a',b')]$ are adjacent in $\zeta(A\times B)$ if and only if $(a,b)(a',b')=(aa',bb')=0$, if and only if  $([a],[a'])\in E(\zeta(A))$ and $([b],[b'])\in E(\zeta(B))$; thus, $\phi$ is a strong morphism of graphs, which is evidently surjective.

  \smallskip
  \noindent
  2. It is immediate.
\end{proof}

\noindent Notice that, since a composition of strong graph epimorphisms, resp. isomorphisms, is a strong graph epimorphism, resp. isomorphism,  the previous proposition extends to arbitrary finite products.

\begin{corollary}\label{cpfsim}
Let $\{\mc{R}_A\}$ be  one of the zero-divisor functorial families $\{=_A\}$, $\{\approx_A\}$, or $\{\sim_A\}$. Then, for all $A_1,\dots,A_n\in \textbf{CRing}$ we have $\zeta(A_1\times\cdots\times A_n)\simeq\zeta(A_1)\times\cdots\times \zeta(A_n)$.
\end{corollary}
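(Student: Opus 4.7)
The plan is to reduce the statement to the binary case of Proposition \ref{cpf}.2, and then iterate by induction on $n$. Concretely, it suffices to check that each of the three families $\{=_A\}$, $\{\approx_A\}$, $\{\sim_A\}$ satisfies the compatibility hypothesis in part 2 of Proposition \ref{cpf}, namely that $a\mc{R}_A a'$ and $b\mc{R}_B b'$ imply $(a,b)\mc{R}_{A\times B}(a',b')$ for all rings $A,B$. Once this is done, Proposition \ref{cpf}.2 gives an isomorphism $\zeta(A\times B)\simeq \zeta(A)\times \zeta(B)$, and a straightforward induction (already anticipated in the remark after Proposition \ref{cpf}, since compositions of strong graph isomorphisms are again strong graph isomorphisms) upgrades this to arbitrary finite products.

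The three verifications are essentially immediate. For $\{=_A\}$ there is nothing to prove: $a=a'$ and $b=b'$ trivially give $(a,b)=(a',b')$ in $A\times B$. For $\{\approx_A\}$, if $a=ua'$ and $b=vb'$ with $u\in A^\times$ and $v\in B^\times$, then $(a,b)=(u,v)(a',b')$ and $(u,v)\in(A\times B)^\times$, so $(a,b)\approx_{A\times B}(a',b')$. For $\{\sim_A\}$, from $(a)_A=(a')_A$ and $(b)_B=(b')_B$ we can write $a=ra'$, $a'=sa$, $b=r'b'$, $b'=s'b$ for suitable $r,s\in A$ and $r',s'\in B$; then $(a,b)=(r,r')(a',b')$ and $(a',b')=(s,s')(a,b)$ in $A\times B$, whence $\bigl((a,b)\bigr)=\bigl((a',b')\bigr)$, i.e.\ $(a,b)\sim_{A\times B}(a',b')$.

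For the induction on $n$, the base case $n=1$ is trivial and $n=2$ is exactly Proposition \ref{cpf}.2. Assuming the result for $n-1$, write $A_1\times\cdots\times A_n=\bigl(A_1\times\cdots\times A_{n-1}\bigr)\times A_n$, and apply Proposition \ref{cpf}.2 to the two factors on the right, followed by the inductive hypothesis on the first factor:
\[
\zeta(A_1\times\cdots\times A_n)\simeq \zeta(A_1\times\cdots\times A_{n-1})\times\zeta(A_n)\simeq \zeta(A_1)\times\cdots\times\zeta(A_n).
\]
No step is really an obstacle here; the only point worth flagging is that the argument relies essentially on the compatibility property being stable under iterated products, which is automatic once one checks that each family is compatible with a single binary product, as done above.
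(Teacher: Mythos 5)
Your proposal is correct and follows essentially the same route as the paper: the paper likewise reduces to Proposition \ref{cpf}.2 by checking the compatibility hypothesis for $\{\approx_A\}$ and $\{\sim_A\}$ (with the same explicit witnesses $a=ha'$, $a'=h'a$, etc.), and extends to $n$ factors via the remark that compositions of strong graph isomorphisms are strong graph isomorphisms. Your write-up is just slightly more explicit about the induction and about the trivial case $\{=_A\}$.
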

\begin{proof}
  One only needs to show that $\{\approx_A\}$ and $\{\sim_A\}$ satisfy the hypothesis of Proposition \ref{cpf}.2. Let $a\sim_A a'$ and $b\sim_B b'$; then there exist $h, h'\in A$\, and\,  $k, k'\in B$ such that  $a=ha'$, $a'=h'a$, $b=kb'$, $b'=k'b$. Thus, $(a,b)=(h,k)(a',b')$ and $(a',b')=(h',k')(a,b)$, i.e. $(a,b)\sim_{A\times B}(a',b')$.

  The proof for $\{\approx_A\}$ is utterly similar.
\end{proof}

Applying the previous Corollary to the family $\{\mc{R}_A\}=\{\approx_A\}$ in the case of finite rings we recover the following result.

\begin{corollary}[\cite{DJS21}, Proposition 3.4]\label{djsdjs}
The functor $\Theta \,:\, \tf{FinCRing} \lra \tf{Graph}$ preserves finite products. 
\end{corollary}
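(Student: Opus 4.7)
The plan is very short because this corollary is essentially a translation of Corollary \ref{cpfsim} into the language of \cite{DJS21}. First I would recall the identification, already recorded earlier in the excerpt, that for every finite ring $A$ one has $\Theta(A) = \zeta(A, \approx_A)$; this identifies $\Theta$ with the restriction to $\tf{FinCRing}$ of the zero-divisor functor associated to the functorial family $\{\approx_A\}$. I would also note (one line) that $\tf{FinCRing}$ is closed under finite products, since a finite product of finite rings is finite, so the statement makes sense inside this subcategory.

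Next I would verify the preservation of the terminal object: the terminal object of $\tf{FinCRing}$ is the zero ring, whose zero-divisor graph (for any zero-divisor functorial family) is a single vertex with a loop, which is the terminal object of $\tf{Graph}$. This was already observed at the start of Section \ref{FUN}.3.

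Finally, for binary products, I would simply invoke Corollary \ref{cpfsim} applied to the family $\{\approx_A\}$: the corollary gives that the canonical comparison morphism
\[
\phi \colon \zeta(A \times B) \longrightarrow \zeta(A) \times \zeta(B)
\]
is an isomorphism in $\tf{Graph}$ for all $A, B$. Since $\phi$ is the canonical morphism built from the projections via the universal property (as in Proposition \ref{cpf}), this is exactly the statement that $\Theta$ preserves binary products on $\tf{FinCRing}$. Combining the terminal object and binary products yields preservation of all finite products, cf.\ \cite{B94}. There is no real obstacle here; the only thing worth flagging in the write-up is that the hypothesis of Proposition \ref{cpf}.2 was already verified for $\{\approx_A\}$ in the proof of Corollary \ref{cpfsim}, so no additional computation is needed.
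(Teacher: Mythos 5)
Your proposal is correct and follows essentially the same route as the paper, which obtains the corollary by applying Corollary \ref{cpfsim} to the family $\{\approx_A\}$ restricted to finite rings, using the identification $\Theta(A)=\zeta(A,\approx_A)$. The extra remarks on the terminal object and on $\tf{FinCRing}$ being closed under finite products are harmless elaborations of what the paper leaves implicit.
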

\begin{center}
    \includegraphics[scale=0.28]{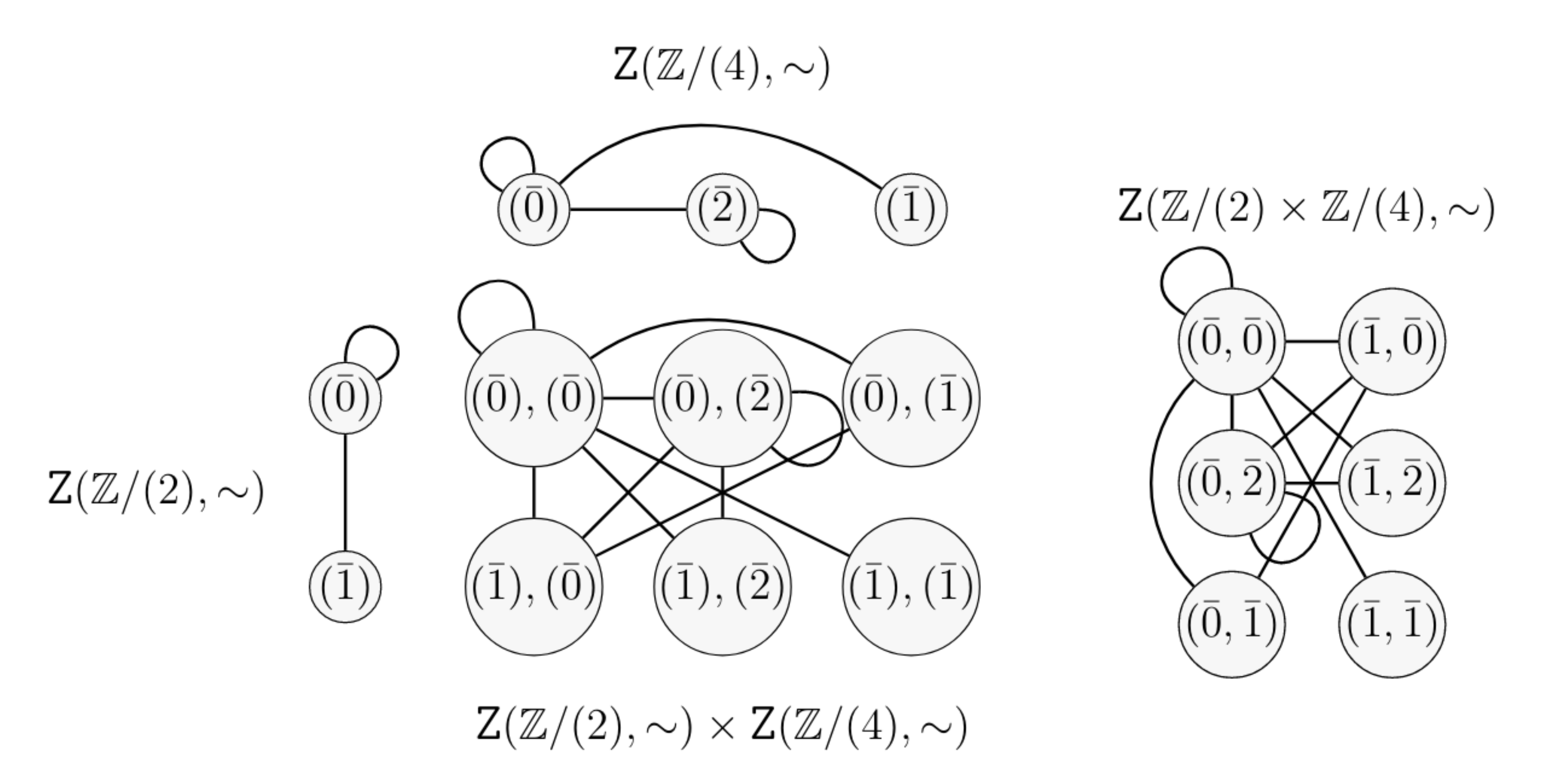}\phantom{$\zeta(\ZZ/(2),\sim)$}
\end{center}

\begin{example}\label{maunothom}
  Proceeding like in  Example \ref{ffexample3}, consider the functorial families $\{\mc{R}_A\}=\{\sim_A\}$ and $\{\mc{R}'_A\}=\{=_A\}$; notice that $A^\times=\bigsqcup_{a\in A^\times}[a]_{\mc{R}_A}$, and define $\mc{R}''_A$ by means of $\{\mc{R}_A\}$ and $\{\mc{R}'_A\}$. In the proof of Corollary \ref{cpfsim} we have seen that $\{\mc{R}_A\}$ and $\{\mc{R}'_A\}$ verify the hypothesis of Proposition \ref{cpf}.2, nonetheless $\{\mc{R}''_A\}$ does not. Take for instance $A_1=A_2=\ZZ$; then $1\mc{R}''_\ZZ\small{-}1$, $2\mc{R}''_\ZZ 2$, but it does not hold that  $(1,2)\mc{R}''_{\ZZ\times\ZZ} (\small{-}1,2)$.
  Applying the associated zero-divisor functor $\zeta$ we find out that $\zeta(\ZZ/(2)\times \ZZ/(4))$ has seven vertices, whereas $\zeta(\ZZ/(2))\times\zeta(\ZZ/(4))$ has six and, hence, they are not isomorphic. 
\end{example}

We observe that, in general, $\zeta$ does not reflects direct products. Consider for instance two morphisms $\QQ\times\ZZ/(2)\longrightarrow\QQ(\sqrt{2})$ and $\QQ\times\ZZ/(2)\longrightarrow \ZZ/(2)$; then $\zeta(\QQ(\sqrt{2})\times\ZZ/(2),\sim)\simeq\zeta(\QQ(\sqrt{2}),\sim)\times\zeta(\ZZ/(2),\sim)\simeq\zeta(\QQ,\sim)\times\zeta(\ZZ/(2),\sim)$ in \tf{Graph}, whereas $\QQ\times\ZZ/(2)\not\simeq\QQ(\sqrt{2})\times\ZZ/(2)$ in \tf{CRing}.
\smallskip

Now we look at equalizers. Consider $A, B\in\tf{CRing}$\, and\, $f, g \,:\,A\to B$ morphisms in $\tf{CRing}$. Recall that the \emph{equalizer} $\Eq(f,g)$  is the subring $\ker(f-g)\subseteq A$. Therefore, we have an inclusion map $\Eq(f,g)\to A$ that  induces a morphism $\zeta(\Eq(f,g))\to\zeta(A)$ in \tf{Graph}, which composed by  $\zeta(f)$ or by $\zeta(g)$ returns the same morphism $\zeta(\Eq(f,g))\to \zeta(B)$ and, thus, we can consider the unique canonical graph morphism $\psi:\zeta(\Eq(f,g))\to\Eq(\zeta(f),\zeta(g))$ given by the universal property of the equalizer  in \tf{Graph}.

\begin{proposition}\label{ceq}
  Let $A,B$ rings and $f,g:A\to B$ ring homomorphisms; then 
$\psi:\zeta(\Eq(f,g))\to \Eq(\zeta(f),\zeta(g))$  is a strong graph morphism.
  Moreover,
  \begin{enumerate}[1.]
\item  if $\{\mc{R}_A\}$ satisfies the following property: for all $A, B$ in $\textbf{CRing}$ and $f,g:A\to B$,
$$\big(a\mc{R}_A a' \Longrightarrow a\mc{R}_{\Eq(f,g)}a' \big)\text{\,  for all\,}  a,a'\in \Eq(f,g), $$  then $\psi$ is a graph monomorphism;

  \item if $\{\mc{R}_A\}$ also satisfies the following property: for all $A, B$ in $\textbf{CRing}$ and $f,g:A\to B$
$$\big(f(a)\mc{R}_B\;g(a) \Longrightarrow \exists\;a'\in\Eq(f,g) \text{ such that } a\mc{R}_Aa' \big)\text{\,  for all\,}  a\in A, ,$$ then $\psi$ is a graph epimorphism.
  \end{enumerate}
\end{proposition}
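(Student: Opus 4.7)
The plan is to unwind the definitions of the two graphs involved, make the canonical map $\psi$ explicit on vertices, and then verify each claimed property directly from the hypotheses on $\{\mc{R}_A\}$.

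First I would describe $\psi$ explicitly. A vertex of $\zeta(\Eq(f,g))$ is an $\mc{R}_{\Eq(f,g)}$-class $[a]_{\mc{R}_{\Eq(f,g)}}$ of some $a\in\Eq(f,g)$, while a vertex of $\Eq(\zeta(f),\zeta(g))$ is, by definition of equalizer in $\tf{Graph}$, an $\mc{R}_A$-class $[a]_{\mc{R}_A}$ with $f(a)\mc{R}_B g(a)$. The unique map induced by the universal property must send $[a]_{\mc{R}_{\Eq(f,g)}}\mapsto [a]_{\mc{R}_A}$, which is well-defined because $\{\mc{R}_A\}$ is functorial and the inclusion $\Eq(f,g)\hookrightarrow A$ is a ring homomorphism. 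Moreover for $a\in\Eq(f,g)$ we have $f(a)=g(a)$, so $[a]_{\mc{R}_A}$ indeed lies in $\Eq(\zeta(f),\zeta(g))$.

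Next I would show that $\psi$ is a strong morphism. Since both $\mc{R}_{\Eq(f,g)}$ and $\mc{R}_A$ are zero-divisor relations, adjacency in either graph is simply the condition $aa'=0$ in the corresponding ring, evaluated on any representatives. For $a,a'\in\Eq(f,g)$, the product $aa'$ is the same element of $A$ and of $\Eq(f,g)$, and $0$ belongs to both, so $aa'=0$ holds in $\Eq(f,g)$ if and only if it holds in $A$. This yields simultaneously the morphism and comorphism properties of $\psi$.

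For part 1, I would use the standard fact that a morphism in $\tf{Graph}$ is a monomorphism exactly when it is injective on vertices. If $\psi([a]_{\mc{R}_{\Eq(f,g)}})=\psi([a']_{\mc{R}_{\Eq(f,g)}})$ with $a,a'\in\Eq(f,g)$, then $a\mc{R}_A a'$; hypothesis 1 immediately gives $a\mc{R}_{\Eq(f,g)}a'$, so the two classes coincide. For part 2, I would use the dual characterization: a graph morphism is an epimorphism precisely when it is surjective on vertices. Given a vertex $[a]_{\mc{R}_A}\in\Eq(\zeta(f),\zeta(g))$, we have $f(a)\mc{R}_B g(a)$ by definition, so hypothesis 2 produces $a'\in\Eq(f,g)$ with $a\mc{R}_A a'$; hence $[a]_{\mc{R}_A}=[a']_{\mc{R}_A}=\psi([a']_{\mc{R}_{\Eq(f,g)}})$.

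The whole argument is essentially a matter of correctly identifying the vertex sets of $\zeta(\Eq(f,g))$ and $\Eq(\zeta(f),\zeta(g))$; no genuine obstacle arises. The only mild subtlety worth stressing in the write-up is that hypotheses 1 and 2 are precisely the translation, at the level of equivalence relations, of injectivity and surjectivity of $\psi$ on vertices, so the statements are in a sense optimal.
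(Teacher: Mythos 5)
Your proposal is correct and follows essentially the same route as the paper: the paper also defines $\psi([a]_{\Eq(f,g)})=[a]_A$, notes it is immediately a strong morphism, and observes that hypotheses 1 and 2 are exactly injectivity and surjectivity of $\psi$ on vertices. You have merely spelled out the details (the identification of the vertex sets, and the fact that mono/epi in $\tf{Graph}$ correspond to injective/surjective on vertices) that the paper leaves implicit.
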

\begin{proof} By construction $\psi$ is necessarily defined as
$\psi([a]_{\Eq(f,g)})=[a]_{A}$, and it is immediately seen that $\psi$  is a strong graph morphism. The hypotheses in 1, resp. 2, are exactly what is needed for $\psi$ to be injective, resp. surjective.
\end{proof}

An example of a zero-divisor functorial family that satifies such properties is $\{=_A\}$, but not every zero-divisor functorial family is such; for instance,   the family $\{\sim_A\}$ does not preserves equalizers, as observed in \cite{DJS21}, Example 3.5. We see next that $\{=_A\}$ is indeed the only zero-divisor functorial family that preserves them.

\begin{proposition}\label{finestblendEq}
Let $\{\mc{R}_A\}$ be a zero-divisor functorial family; then its associated zero-divisor functor $\zeta$ preserves equalizers if and only if $\mc{R}_A$ is  $=_A$ for all $A\in \tf{CRing}$.
\end{proposition}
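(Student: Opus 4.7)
The plan is to prove the two implications separately. For the easy direction, I would observe that when $\mc{R}_A = {=}_A$ on every ring, both hypotheses of Proposition \ref{ceq} are trivially satisfied: condition (1) becomes the tautology $a=a'\Rightarrow a=a'$, and condition (2) reduces to the observation that $f(a)=g(a)$ forces $a\in\Eq(f,g)$, so one may take $a'=a$. Combined with the fact that $\psi$ is always a strong morphism, this shows $\psi$ is a bijective strong morphism, and hence a graph isomorphism.

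For the converse, I would argue by contradiction: assume $\zeta$ preserves equalizers, and suppose there exist a ring $A$ and distinct elements $a\neq a'$ of $A$ with $a\,\mc{R}_A\,a'$. To exhibit an equalizer that is not preserved, I would use the universal test ring $\ZZ[x]$ together with the two homomorphisms $f,g\,:\,\ZZ[x]\lra A$ defined by $f(x)=a$ and $g(x)=a'$. Then
$$\Eq(f,g)=\{p(x)\in\ZZ[x]\,:\,p(a)=p(a')\},$$
which crucially does not contain $x$ itself, since $a\neq a'$.

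Now I would exploit the assumption: the canonical map $\psi\,:\,\zeta(\Eq(f,g))\lra\Eq(\zeta(f),\zeta(g))$ must be surjective on vertices. Since $\zeta(f)([x])=[a]=[a']=\zeta(g)([x])$, the vertex $[x]$ lies in $\Eq(\zeta(f),\zeta(g))$, so surjectivity yields some $q\in\Eq(f,g)$ with $q\,\mc{R}_{\ZZ[x]}\,x$. By Remark \ref{osc zdff} and Proposition \ref{finestblend} the family $\{\mc{R}_A\}$ is non-degenerate and each $\mc{R}_B$ is finer than $\sim_B$; hence $q\sim_{\ZZ[x]} x$, i.e.\ $(q)=(x)$ in $\ZZ[x]$. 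Since $\ZZ[x]^\times=\{\pm 1\}$, this forces $q=\pm x$, and then the equalizer condition $q(a)=q(a')$ gives $a=a'$, a contradiction.

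The delicate point, and really the only one, is the choice of $\ZZ[x]$ as the test domain: its narrow group of units pins the $\sim$-class (and therefore any $\mc{R}_{\ZZ[x]}$-class) of $x$ to the two-element set $\{\pm x\}$, both of whose elements manifestly obstruct the equalizer condition whenever $a\neq a'$. A coarser test ring with more units would leave too many candidates for $q$ and allow the argument to escape.
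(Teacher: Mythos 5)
Your proof is correct and rests on the same key idea as the paper's: testing equalizer preservation against the free ring $\ZZ[x]$ with the two evaluation maps $x\mapsto a$, $x\mapsto a'$, and using Proposition \ref{finestblend} plus $\ZZ[x]^\times=\{\pm1\}$ to pin the preimage $q$ down to $\pm x$. The one genuine difference is that your argument finishes in a single step, whereas the paper only concludes $a=\pm a'$ from the $\ZZ[x]$ test and then runs a second argument with the projections $\pi_1,\pi_2\,:\,A\times A\to A$ and the diagonal $\Eq(\pi_1,\pi_2)$ to eliminate the case $a=-a'$. You correctly observe that this second step is unnecessary: if $q=-x$ lies in $\Eq(f,g)$ then $f(-x)=g(-x)$ gives $-a=-a'$, hence $a=a'$ directly, so both candidates for $q$ yield the contradiction. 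Your explicit treatment of the easy direction (checking that $\{=_A\}$ satisfies both hypotheses of Proposition \ref{ceq}, so that $\psi$ is a bijective strong morphism and hence an isomorphism) is also fine; the paper leaves this implicit in the sentence preceding the proposition.
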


\begin{proof}
For every ring $A$ and elements $a,b\in A$ such that $a\mc{R}_Ab$, consider the unique ring homomorphisms $f, g\,:\,\ZZ[x]\to A$ such that $f(x)=a$,  $g(x)=b$. Since $\zeta(f)([x])=[f(x)]=[a]=[b]=[g(x)]=\zeta(g)([x])$, the equivalence class of the polynomial $x\in\ZZ[x]$ belongs to $\Eq(\zeta(f),\zeta(g))$. By Proposition \ref{ceq} there exists $p(x)\in\Eq(f,g)$ such that $p(x)\mc{R}_{\ZZ[x]}x$. Since  $\{\mc{R}_A\}$ is non degenerate functorial, from Proposition \ref{finestblend} we have $(p(x))=(x)$ in $\ZZ[x]$, i.e. $p(x)=\pm x$. Thus $\pm x\in \Eq(f,g)$; hence $a=\pm b$.

Now, consider the projections maps $\pi_1,\pi_2\,:\, A\times A\to A$; then,  $a\mc{R}_A b$ implies that $\pi_1(a,b)\mc{R}_A\pi_2(a,b)$. Since $\zeta$ preserves all equalizers, there exists $(c,c)\in\Eq(\pi_1,\pi_2)$ such that $(c,c)\mc{R}_{A\times A}(a,b)$;  hence  $(c,c)=\pm(a,b)$, and we may conclude that $a=b$ and that $\mc{R}_A$ is $=_A$, for all $A$.
\end{proof}

As a direct consequence, we have the following.
\begin{proposition}\label{flp}
The only  zero-divisor functorial family for which its associated zero-divisor functor $\zeta$ preserves all finite limits  is the trivial functorial family.
\end{proposition}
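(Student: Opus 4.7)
The plan is to reduce this proposition to a combination of results already proved in the paper, invoking the standard characterization of finite-limit preserving functors. Recall from the cited reference \cite{B94}, Proposition 2.8.2, that a functor preserves all finite limits if and only if it preserves the terminal object, binary products, and equalizers. So the strategy is to check these three conditions separately for the two directions of the iff.

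For the forward direction, I would argue as follows. Suppose $\zeta$ preserves all finite limits. Then in particular $\zeta$ preserves equalizers. By Proposition \ref{finestblendEq}, which characterizes the zero-divisor functorial families whose associated $\zeta$ preserves equalizers, this forces $\mc{R}_A$ to be $=_A$ for every $A\in\tf{CRing}$. Thus $\{\mc{R}_A\} = \{=_A\}$ is the trivial functorial family. This is really the heart of the argument, and the main content has already been packed into Proposition \ref{finestblendEq}.

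For the converse direction, I need to verify that the trivial family $\{=_A\}$ yields a functor $\zeta$ that actually preserves all finite limits. The terminal object is preserved independently of the choice of family, as noted at the start of Section \ref{FUN}.3, since $\zeta(\tf{0})$ is the one-vertex graph with a loop. Binary (and hence finite) products are preserved by Corollary \ref{cpfsim}, which explicitly lists $\{=_A\}$ as one of the families satisfying the hypothesis of Proposition \ref{cpf}.2. For equalizers, I would observe that $\{=_A\}$ trivially satisfies both hypotheses of Proposition \ref{ceq}: the first condition $a=_A a' \Rightarrow a=_{\Eq(f,g)}a'$ is immediate for elements of $\Eq(f,g)$, and the second condition is immediate since $f(a)=_B g(a)$ already means $a\in\Eq(f,g)$, so we may take $a'=a$. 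Hence the canonical graph morphism $\psi$ is both a monomorphism and an epimorphism, and since it is always strong, it is an isomorphism. This establishes preservation of equalizers, and together with the previous two points, preservation of all finite limits.

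There is no significant obstacle here: the proposition is essentially a formal corollary of Proposition \ref{finestblendEq} and Corollary \ref{cpfsim}, with the standard categorical reduction to the three elementary limits. The only minor care is to also spell out the converse, namely that $\{=_A\}$ really does satisfy the hypotheses of both Proposition \ref{cpf}.2 and Proposition \ref{ceq}, so that the characterization becomes an ``if and only if.''
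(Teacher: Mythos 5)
Your proposal is correct and matches the paper's intended argument: the paper states this proposition as a direct consequence of Proposition \ref{finestblendEq}, having already reduced finite-limit preservation to the terminal object, binary products, and equalizers via \cite{B94}. You merely spell out both directions explicitly, which is a faithful elaboration rather than a different route.
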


\noindent
Thus, if $\{\mc{R}_A\}$ is not the trivial functorial family, $\zeta$ does not  have a left adjoint functor.

Finally, notice that $\zeta$ never  preserves coproducts, independently of the choice of the family $\{\mc{R}_A\}$, since a  zero-divisor graphs is always connected, $\zeta(A\otimes_\ZZ B)$ cannot be written as coproduct of graphs.
Therefore $\zeta$ does not preserve all finite colimits and it does not admit a right adjoint functor.  Observe that, for every $A,B\in\tf{CRing}$ we have a canonical graph morphism $\zeta(A)\sqcup\zeta(B)\to \zeta(A\otimes_\ZZ B)$, which is not injective, because $[0]_A$ and $[0]_B$ both map to $[0]_{A\otimes B}$, and it is easy to see that in general it is neither surjective.
%Take for instance $A=\QQ=B$ in the case of $\{=_A\}$; then $[\frac{1}{2}\otimes\frac{1}{2}]$ does not belongs to the image.

\subsection{An Inversion of Product Theorem}
Given a graph $G$ and $v\in V(G)$, one can consider the subgraph of $G$ induced by $N(v)$, where $N(v)\subseteq V(G)$ is the set of adjacents of $v$. We start with the following easy lemma.

\begin{lemma}\label{ssi}
Let $A_1, A_2$ be rings and  $G_1, G_2$ the subgraphs of  $\zeta(A_1)\times \zeta(A_2)$ induced by $N\big([0],[1]\big)$ and $N\big([1],[0]\big)$, respectively; then,  $G_1\simeq \zeta(A_1)$ and $G_2\simeq \zeta(A_2)$.
\end{lemma}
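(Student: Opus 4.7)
The plan is straightforward: identify $N([0],[1])$ in $V(\zeta(A_1)\times\zeta(A_2))$ explicitly, then exhibit an obvious bijection to $V(\zeta(A_1))$ and check it is a strong graph morphism. Since $G_1$ is the \emph{induced} subgraph on $N([0],[1])$ and the Kronecker product's adjacency is the conjunction of coordinate-wise adjacencies, the verification reduces to bookkeeping about what is adjacent to $[0]$ and what is adjacent to $[1]$ in a zero-divisor graph.

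First, I would compute the neighborhood. A vertex $([a],[b])$ is adjacent to $([0],[1])$ in $\zeta(A_1)\times\zeta(A_2)$ exactly when $[a]$ is adjacent to $[0]$ in $\zeta(A_1)$ and $[b]$ is adjacent to $[1]$ in $\zeta(A_2)$. The first condition is automatic, since $a\cdot 0 = 0$ for every $a\in A_1$; the second forces $b\cdot 1 = 0$, i.e.\ $b = 0$. (Here I use Proposition \ref{asymp} and the ensuing observation that $[0]=\{0\}$ for any zero-divisor relation, so there is a unique class $[1]$ and a unique class $[0]$ with this property.) Hence
\[
N\bigl([0],[1]\bigr) \;=\; \bigl\{\,([a],[0]) : [a]\in V(\zeta(A_1))\,\bigr\}.
\]

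Next I would define $\varphi:V(\zeta(A_1))\to V(G_1)$ by $\varphi([a])=([a],[0])$, which is clearly a bijection. To see that $\varphi$ is a strong morphism (hence a graph isomorphism, cf.\ Section \ref{PRE}), note that in the Kronecker product
\[
([a],[0])\ \text{adj}\ ([a'],[0]) \quad\Longleftrightarrow\quad [a]\ \text{adj}\ [a']\ \text{in }\zeta(A_1)\ \text{ and }\ [0]\ \text{adj}\ [0]\ \text{in }\zeta(A_2).
\]
The right-hand conjunct holds because $0\cdot 0 = 0$, so $[0]$ has a loop in $\zeta(A_2)$. Thus the adjacency in $G_1$ is equivalent to the adjacency of $[a]$ and $[a']$ in $\zeta(A_1)$, and $\varphi$ is a strong graph morphism. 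The argument for $G_2\simeq \zeta(A_2)$ is identical after swapping the roles of the two factors.

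There is no genuine obstacle here; the only thing that might require a word of justification is the innocuous but essential fact that $[0]$ always carries a loop in $\zeta(A)$, since without loops at $[0]$ the neighborhood in the Kronecker product would collapse. Everything else is a direct unravelling of the definitions of the Kronecker product and of the zero-divisor graph.
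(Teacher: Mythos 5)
Your proof is correct and takes essentially the same approach as the paper: identify $N([0],[1])$ as the set of vertices $([a],[0])$ and use the component-wise definition of adjacency in the Kronecker product (together with the loop at $[0]$, since $0\cdot 0=0$) to conclude that the induced subgraph is isomorphic to $\zeta(A_1)$. The paper's proof is simply a terser statement of the same argument.
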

\begin{proof}
Let us show for instance that $G_1\simeq \zeta(A_1)$; the other case is similar.  Recall that a vertex $\big([a],[b]\big)$ belonging $\zeta(A)\times\zeta(B)$ is adjacent to  $\big([0],[1]\big)$ if and only if  $b=0$. The claim follows directly from the fact that in a product of graphs adjacency is defined component by component. 
\end{proof}

\begin{definition}\label{orto}
We call two elements $a,b\in A$  \emph{orthogonal}, and we write $a\perp b$, if  $ab=0$\, and\, $\Ann(a)\cap \Ann(b)=0$.
\end{definition}
\noindent In relation to the study of graphs, orthogonality has been introduced in \cite{ALS03}; we  provided in the above a modification of the original definition, that will allow us to work with graphs with loops as well.  
It is easy to see that two elements $a, b\in A$ are orthogonal if and only if  $ab=0$ and $a+b$ is an $A$-regular, cf. \cite{ALS03}, Lemma 3.3.
%\textcolor{purple}{A meno delle modifiche nella definizione di ortogonalità e quindi opportune modifiche nella dimostrazione, coincide con il Lemma 3.4 di \cite{ALS}}

\noindent We are now ready to prove one of our main results.

\begin{theorem}\label{ThetaA}
  Let $\{\mc{R}_A\}$ be a zero-divisor functorial family. Let also $A, B_1, B_2$ be rings such that  $\zeta(A)\simeq \zeta(B_1)\times\zeta(B_2)$, with  $\zeta(B_1), \zeta(B_2)\not\simeq\zeta(\tf{0})$; then,

  \smallskip
  \noindent
1.    there exist $a_1,a_2\in A$ such that
 \begin{enumerate}[i)]
    \item   $a_1\perp a_2$ and $a_1, a_2\in\mc{D}^*(A)$;
    \item $\zeta(B_1)\simeq\zeta(\Ann(a_2))$\, and\,  $\zeta(B_2)\simeq\zeta(\Ann(a_1))$;
    \item there exist  strong morphisms $\psi_i \,:\,\zeta(B_i)\longrightarrow\zeta(A/\Ann(a_i))$, with $i=1,2$.
\end{enumerate}

 \smallskip
 \noindent
 2. Let $i, j\in\{1, 2\}$ with $i\neq j$. If $\bar{x}\mc{R}_{A/\Ann(a_i)}\bar{x}'$ implies that $x\mc{R}_Ax'$ for all $x,x'\in\Ann(a_j)$, then $\psi_i$ is  injective;

 \smallskip
 \noindent
 3. if $\Ann(a_1)+\Ann(a_2)=(1)$, then $A\simeq A/\Ann(a_1)\times A/\Ann(a_2)$ and $\psi_1, \psi_2$ are surjective.
   \end{theorem}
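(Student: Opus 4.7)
The approach I would take is to use the given graph isomorphism $\varphi\,:\,\zeta(A)\xrightarrow{\sim}\zeta(B_1)\times\zeta(B_2)$ to pull back the distinguished vertices $([1_{B_1}],[0_{B_2}])$ and $([0_{B_1}],[1_{B_2}])$ into $\zeta(A)$, and then to read the ring-theoretic data (i)--(iii) directly off the combinatorial picture provided by Lemma \ref{ssi}. Concretely, I would pick representatives $a_1,a_2\in A$ with $[a_1]=\varphi^{-1}([1_{B_1}],[0_{B_2}])$ and $[a_2]=\varphi^{-1}([0_{B_1}],[1_{B_2}])$. Since $\zeta(B_1),\zeta(B_2)\not\simeq\zeta(\tf{0})$ and, by Proposition \ref{asymp}, $[0]=\{0\}$ in any zero-divisor graph, both $B_1,B_2$ are nonzero and both $a_1,a_2$ are nonzero. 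Then part (i) follows by transporting two combinatorial facts through $\varphi$: the vertices $([1],[0])$ and $([0],[1])$ are adjacent in the product (hence $a_1a_2=0$, so $a_1,a_2\in\mc{D}^*(A)$), and their only common neighbor is $([0],[0])$, which corresponds under $\varphi^{-1}$ to the unique totally-adjacent vertex $[0_A]$; this forces $\Ann(a_1)\cap\Ann(a_2)=0$.

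For part (ii), I would observe that since $\mc{R}_A$ is a zero-divisor relation, $\Ann(a_i)$ is saturated with respect to $\mc{R}_A$, so the neighborhood $N([a_i])\subseteq V(\zeta(A))$ coincides with $V(\zeta(\Ann(a_i)))$. The restriction of $\varphi$ to this neighborhood is then an isomorphism onto the subgraph of $\zeta(B_1)\times\zeta(B_2)$ induced by $N(\varphi([a_i]))$, which by Lemma \ref{ssi} is $\zeta(B_2)$ when $i=1$ and $\zeta(B_1)$ when $i=2$. For (iii), I would define $\psi_i$ as the composition
$$\zeta(B_i)\simeq\zeta(\Ann(a_j))\hookrightarrow\zeta(A)\xrightarrow{\zeta(\pi_i)}\zeta(A/\Ann(a_i))\qquad(j\neq i),$$
where $\pi_i$ is the canonical projection. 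Strongness reduces to the equivalence $xya_i=0\Leftrightarrow xy=0$ for $x,y\in\Ann(a_j)$; one direction is trivial, and the other uses that $xy\in\Ann(a_j)$ together with the relation $\Ann(a_1)\cap\Ann(a_2)=0$ established in (i).

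Parts 2 and 3 are then short. For 2, if $\psi_i([x])=\psi_i([y])$ with $x,y\in\Ann(a_j)$, then $\bar x\,\mc{R}_{A/\Ann(a_i)}\,\bar y$ in $A/\Ann(a_i)$, so the hypothesis gives $x\,\mc{R}_A\,y$, i.e., $[x]=[y]$, proving injectivity. For 3, the combination of $\Ann(a_1)\cap\Ann(a_2)=0$ from (i) with $\Ann(a_1)+\Ann(a_2)=(1)$ is precisely the hypothesis of the Chinese Remainder Theorem, giving $A\simeq A/\Ann(a_1)\times A/\Ann(a_2)$. For surjectivity of $\psi_1$: writing $1=e_1+e_2$ with $e_i\in\Ann(a_i)$ and given $y\in A$, the element $x:=ye_2$ satisfies $x\in\Ann(a_2)$ and $y-x=ye_1\in\Ann(a_1)$, so $\bar x=\bar y$ in $A/\Ann(a_1)$, hence $\psi_1([x])=[\bar y]$. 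A strong morphism that is surjective on vertices is automatically surjective on edges, so $\psi_1$ (and symmetrically $\psi_2$) is a graph epimorphism.

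The main obstacle is the verification of strongness of $\psi_i$ in (iii), which is the one place where all three ingredients---the isomorphism $\varphi$, the orthogonality $a_1\perp a_2$, and the saturation of $\Ann(a_i)$ under $\mc{R}_A$---must interact simultaneously; keeping the index matching between $a_i$ and $\Ann(a_j)$ with $j\ne i$ straight throughout is an additional bookkeeping nuisance.
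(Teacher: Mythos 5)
Your proposal is correct and follows essentially the same route as the paper: pull back $([1],[0])$ and $([0],[1])$ through the isomorphism to get $a_1,a_2$, use adjacency and the uniqueness of the totally-adjacent vertex $[0]$ to get orthogonality and $a_1,a_2\in\mc{D}^*(A)$, apply Lemma \ref{ssi} for (ii), define $\psi_i$ via $\zeta(\Ann(a_j))\to\zeta(A/\Ann(a_i))$ with strongness coming from $\Ann(a_1)\cap\Ann(a_2)=0$, and finish parts 2 and 3 with the functoriality hypothesis and the Chinese Remainder Theorem. Your write-up is in places more explicit than the paper's (the saturation of $\Ann(a_i)$ under $\mc{R}_A$ and the idempotent-style surjectivity argument in part 3), but the underlying argument is the same.
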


\begin{proof} 1. Let  $\varphi \,:\,\zeta(A)\longrightarrow\zeta(B_1)\times\zeta(B_2)$ a graph isomorphism and let us consider $a_1, a_2\in A$ such that $[a_1]=\varphi^{-1}([1],[0])$ and $[a_2]=\varphi^{-1}([0],[1])$. Since $([1],[0]),\, ([0],[1])\in V(\zeta(B_1)\times\zeta(B_2))$ are adjacent and  $\varphi$ is a strong morphism, we have that  $[a_1]=\varphi^{-1}([1],[0])$ and $[a_2]=\varphi^{-1}([0],[1])$ are adjacent as well, i.e.  $a_1a_2=0$.

  Since $[0]$ is the only vertex adjacent to all the others, one has necessarily  $\varphi([0])=([0],[0])$. Thus, if  $a_1$  were zero, we would have that  $\varphi^{-1}([1],[0])=[a_1]=\varphi^{-1}([0],[0])$ which implies $B_1\simeq\tf{0}$ and, in turn,  $\zeta(B_1)\simeq\zeta(\tf{0}),$ a contradiction. The case $a_2=0$ is analogous and  we have  proven the first part of the claim.

Now, let  $x\in \Ann(a_1)\cap \Ann(a_2)$ and $b_1\in B_1$,  $b_2\in B_2$ such that  $\varphi([x])=([b_1],[b_2])$. Since  $[x]$ is adjacent to both $[a_1]$ and $[a_2]$, we have that  $\varphi([x])$ is adjacent to both $\varphi([a_1])=([1],[0])$ and $\varphi([a_2])=([0],[1])$. Thus, from the very definition of adjacency of the product it follows that  $b_1= 0 = b_2$, which implies that  $[x]=\varphi^{-1}([0],[0])=[0]$  and $x=0$, and we proved i).

    \smallskip
    \noindent    By our choice of $a_1, a_2$ and the fact that $\varphi$ is an isomorphism, the conclusion of ii) is a straightforward application of  Lemma \ref{ssi}.

    \smallskip
    \noindent    iii) By symmetry, we may prove the assertion for $i=1$. By what was proved in ii), we only need to show that there exists a morphism $\psi_1' \,:\,\zeta(\Ann(a_2))\longrightarrow\zeta(A/\Ann(a_1))$ which is strong. Let us then define  $\psi_1'\big([x]\big)$ as $[\bar{x}]$. The map  $\psi_1'$ results to be a strong morphism, since  $\Ann(a_1)\cap\Ann(a_2)=0$: taken $x_1, x_2\in\Ann(a_2)$, one has that  $x_1x_2=0$ if and only if $x_1x_2\in\Ann(a_1)$.
    
    \smallskip
    \noindent    2. By symmetry, we may prove the assertion for $i=1$. From what was proved in 1.ii), we only need to show that $\psi_1'$ is injective, and that is straightforward. 

    \smallskip
    \noindent
    3. By the Chinese Remainder Theorem one has $A\simeq  A/(0)\simeq A/\Ann(a_1)\times A/\Ann(a_2)$. Observe that the maps  $\psi_i'$ defined in the proof of 1.iii) are also surjective. \end{proof}

\begin{remark}\label{comments}
  The above theorem has been inspired by and extends \cite{DJS21}, Theorem 3.6. Its proof contains a mistake, since $\Theta$ is applied to an ideal of a commutative ring.
  
\begin{enumerate}[1.]
\item
  Observe that, in Theorem \ref{ThetaA}.1.ii), $\Ann(a_2)$ is determined once $a_1$ is given, since if $a,b,c\in A$ are such that $a\perp b$ and $a\perp c$ then $b\asymp c$, which is easily seen. 

\item   When $\mc{R}_A=\sim_A$, then the hypothesis of Part 2. holds true. Let $x_1, x_2\in\Ann(a_2)$ be such that $\bar{x}_1\mc{R}_{A/\Ann(a_i)}\bar{x}_2$. Since  $\sim$ is the coarsest of such relations, see Remark \ref{osc zdff},  there exist  $h, k\in A$ such that  $x_1-hx_2=0=x_2-kx_1$ in $A/\Ann(a_1)$ and, therefore,  $x_1-hx_2,x_2-kx_1\in \Ann(a_1)$. Since  $x_1, x_2\in\Ann(a_2)$, one has $x_1-hx_2=0=x_2-kx_1$ in $A$, i.e. $(x_1)=(x_2)$.
\end{enumerate}
\end{remark}

Recall that, given a ring $A$, the set  $S=A\smallsetminus\mc{D}(A)$ is multiplicatively closed; $A$ is said to be {\em total} if $A$ is isomorphic to its total ring of fractions $S^{-1}A$; this is equivalent to require that every regular element of $A$ is invertible.
%\textcolor{purple}{A parte prendere $A,B_1,B_2$ finiti e la questione sottoanello del Teorema precedente (nostro) che viene sostituito con quoziente, il Teorema 3.6 di \cite{DJS} ha l'enunciato praticamente identico al seguente corollario. La dimostrazione ormai è slegata dall'articolo grazie al Teorema precedente (nostro)}

\begin{corollary}\label{totale}
Let $A$ be total, $\mc{R}_A=\sim_A$ and $B_1, B_2$ rings such that  $\zeta(A)\simeq\zeta(B_1)\times\zeta(B_2)$, with  $\zeta(B_1),\zeta(B_2)\not\simeq \zeta(\tf{0})$; then  $A\simeq A_1\times A_2$, where $A_1$, $A_2$ are quotients of $A$ such that  $\zeta(A_1)\simeq \zeta(B_1)$ and $\zeta(A_2)\simeq\zeta(B_2)$.
\end{corollary}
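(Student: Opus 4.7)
The plan is to assemble the conclusion by applying Theorem~\ref{ThetaA} to the data $(A,B_1,B_2)$ and the family $\{\sim_A\}$, using the hypothesis that $A$ is total to verify the comaximality condition appearing in Part~3 of the theorem.

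First I would invoke Theorem~\ref{ThetaA}.1 to produce orthogonal zero-divisors $a_1,a_2\in\mc{D}^*(A)$ with $a_1\perp a_2$, and strong graph morphisms $\psi_i\,:\,\zeta(B_i)\lra\zeta(A/\Ann(a_i))$ for $i=1,2$. The key observation is then the following: by the characterization recalled just before Theorem~\ref{ThetaA}, orthogonality $a_1\perp a_2$ is equivalent to $a_1a_2=0$ together with $a_1+a_2$ being $A$-regular. Since $A$ is total by assumption, every regular element of $A$ is invertible, so $a_1+a_2\in A^\times$. Because $a_1\in\Ann(a_2)$ and $a_2\in\Ann(a_1)$, the element $a_1+a_2$ lies in $\Ann(a_1)+\Ann(a_2)$, which therefore equals the unit ideal.

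Having verified the comaximality hypothesis $\Ann(a_1)+\Ann(a_2)=(1)$, I would apply Theorem~\ref{ThetaA}.3 to obtain the ring decomposition
\[
A\;\simeq\;A/\Ann(a_1)\times A/\Ann(a_2),
\]
and simultaneously the surjectivity of $\psi_1,\psi_2$. To upgrade these strong surjective graph morphisms to isomorphisms, I would then apply Theorem~\ref{ThetaA}.2: by Remark~\ref{comments}.2, when $\mc{R}_A=\sim_A$ the hypothesis of Part~2 is automatically satisfied, so $\psi_1$ and $\psi_2$ are also injective. Being bijective strong graph morphisms, they are graph isomorphisms, which yields $\zeta(A/\Ann(a_i))\simeq\zeta(B_i)$ for $i=1,2$.

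Setting $A_1:=A/\Ann(a_1)$ and $A_2:=A/\Ann(a_2)$ thus gives the required decomposition $A\simeq A_1\times A_2$ with quotients whose zero-divisor graphs are isomorphic to the prescribed $\zeta(B_1),\zeta(B_2)$. The only place where any real work happens is the comaximality step; everything else is bookkeeping with Theorem~\ref{ThetaA} and Remark~\ref{comments}. The hypothesis that $A$ is total is used exactly once, and precisely to turn the $A$-regularity of $a_1+a_2$ (free from orthogonality) into invertibility, which is what the Chinese-Remainder step in Theorem~\ref{ThetaA}.3 demands.
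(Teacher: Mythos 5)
Your proposal is correct and follows essentially the same route as the paper's own proof: invoke Theorem~\ref{ThetaA}.1 to get the orthogonal pair $a_1,a_2$, use totality to turn the regularity of $a_1+a_2$ into invertibility and hence the comaximality $\Ann(a_1)+\Ann(a_2)=(1)$, and then conclude via Remark~\ref{comments}.2 together with Parts~2 and~3 of the theorem. No discrepancies to report.
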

\begin{proof}
By Theorem \ref{ThetaA}.1 it follows that there exist  $a_1,a_2\in\mc{D}^*(A)$ such that $a_1+a_2$ is a regular element and, since $A$ is total, there exists $c\in A$ such that $c(a_1+a_2)=1$. Moreover,  $a_1\in \Ann(a_2)$ and $a_2\in \Ann(a_1)$, that implies that  $\Ann(a_1)+\Ann(a_2)=(1)$; the conclusion follows now by Remark \ref{comments}.2  and Theorem \ref{ThetaA}.2 and 3.
\end{proof}

\begin{example}\label{nototal} Consider $=\mb{F}_2[x,y]/(xy)$; then $S=A\smallsetminus(\bar{x},\bar{y})$ and $S^{-1}A$ is not a total ring. It holds that  $\zeta(S^{-1}A,\sim)\simeq\zeta(\mb{F}_2[x],\sim)\times\zeta(\mb{F}_2[y],\sim)\simeq\zeta(\ZZ,\sim)\times\zeta(\ZZ,\sim)$, but $S^{-1}A$ is local and, thus, does not admit a non trivial factorization as a direct product.
\end{example}

We conclude this section with a result about $\zeta$ and localization.

\begin{proposition} Let $A$ be a ring, $S\subset A$ multiplicatively closed and  $\varphi_S \,:\,A\longrightarrow S^{-1}A$ be the canonical homomorphism; then

\begin{enumerate}[1.]
\item
if  $\frac{a}{s}\mc{R}\frac{a}{1}$ for all $a\in A$, $s\in S$, then $\zeta(\varphi_S)$ is a graph epimorphism;

\item  if $S$ consists of regular elements, then  $\zeta(\varphi_S)$ is a graph comorphism.
\end{enumerate}
\end{proposition}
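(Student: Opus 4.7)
The strategy is to analyse the graph morphism $\zeta(\varphi_S) \,:\, \zeta(A)\to\zeta(S^{-1}A)$, which sends a vertex $[a]$ to $[\varphi_S(a)]=[a/1]$, and to check its behaviour on vertices and on edges separately in each case.

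Part 2 will be a short direct verification. Assuming $([a/1],[b/1])$ is an edge of $\zeta(S^{-1}A)$, i.e.\ $ab/1=0$ in $S^{-1}A$, there exists $u\in S$ with $uab=0$ in $A$; since $S$ consists of regular elements, $u$ cancels and $ab=0$. Hence $([a],[b])$ is an edge of $\zeta(A)$, which is precisely the defining condition of a graph comorphism.

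For part 1, I would first observe that any graph morphism surjective on vertices is automatically an epimorphism in $\tf{Graph}$, since a graph morphism is determined by its underlying vertex map: if $\zeta(\varphi_S)$ is vertex-surjective, then any two graph morphisms out of $\zeta(S^{-1}A)$ agreeing after precomposition with $\zeta(\varphi_S)$ must coincide on every vertex and hence be equal. The essential step is therefore vertex-surjectivity: an arbitrary vertex of $\zeta(S^{-1}A)$ has the form $[a/s]$ with $a\in A$, $s\in S$, and by hypothesis $a/s\,\mc{R}\,a/1$, so $[a/s]=[a/1]=\zeta(\varphi_S)([a])$.

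Should one prefer to witness surjectivity at the level of edges as well, the main obstacle, and the place where the hypothesis on $\mc{R}$ is used most delicately, is the following bookkeeping argument. Given an edge between $[a/s]$ and $[b/t]$ in $\zeta(S^{-1}A)$, one extracts $u\in S$ with $uab=0$, so that $([ua],[b])$ is an edge of $\zeta(A)$. The identity $a/1=ua/u$ in $S^{-1}A$ combined with the hypothesis applied to the numerator $ua$ and denominator $u$ gives $[a/1]=[ua/1]$, which together with $[a/s]=[a/1]$ and $[b/t]=[b/1]$ shows that $([ua],[b])$ lifts the given edge. This confirms that $\zeta(\varphi_S)$ is surjective on both vertices and edges and is thus a graph epimorphism.
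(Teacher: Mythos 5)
Your proof is correct and follows essentially the same route as the paper: part 2 is the identical cancellation argument with a regular element of $S$, and part 1 rests on the same observation that the hypothesis $\frac{a}{s}\mc{R}\frac{a}{1}$ makes $\zeta(\varphi_S)$ surjective on vertices, which suffices for it to be an epimorphism in $\tf{Graph}$. Your additional paragraph on lifting edges is correct but not needed for the statement.
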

\begin{proof}
  1.  Since $\frac{a}{s}\mc{R}\frac{a}{1}$ for all $a\in A$ and $s\in S$, every element in $V(\zeta(S^{-1}(A))$ is the image by $\zeta(\varphi_S)$ of a vertex in $\zeta(A)$, as desired.

  \smallskip
  \noindent
2.   Let now  $S\subseteq A\smallsetminus\mc{D}(A)$,\, and\, $a, b\in A$ such that $\frac{a}{1}\frac{b}{1}=0$; then, there exists a regular element $c$ such that $cab=0$. Therefore $ab=0$ and $\zeta(\varphi_S)$ is a comorphism.
\end{proof}

\noindent Note that the trivial family of relations does not satisfy $1.$ of the above proposition.

\begin{corollary}
Let $\{\mc{R}_A\}=\{\sim_A\}$; then, for all total rings $A\in\textbf{CRing}$ and multiplicatively closed sets $S\subseteq A$,  we have that $\zeta(\varphi_S)$ is a strong graph epimorphism. In particular,  $\zeta(S^{-1}A)$ as a strong quotient of $\zeta(A)$.
\end{corollary}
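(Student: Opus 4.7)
The plan is to derive the corollary directly from the preceding Proposition, applied to the family $\{\sim_A\}$, by combining its two parts.

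For the epimorphism part (Part 1), I would verify the hypothesis $\frac{a}{s} \sim_{S^{-1}A} \frac{a}{1}$ for all $a\in A$ and $s\in S$. This is immediate: since $s \in S$, the element $s/1$ is a unit in $S^{-1}A$ with inverse $1/s$, so $(a/s) = (a/1)$ as principal ideals of $S^{-1}A$, which is precisely the relation $\sim_{S^{-1}A}$. Hence $\zeta(\varphi_S)$ is a graph epimorphism by Part 1.

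For the comorphism part (Part 2), the required hypothesis is that $S$ consists of regular elements of $A$, and it is here that the totality assumption enters: in a total ring, regular elements coincide with units, so any regular $s \in S$ is automatically invertible. Given $a,b\in A$ with $\varphi_S(a)\varphi_S(b)=0$ in $S^{-1}A$, there exists $s\in S$ with $sab=0$ in $A$; if such $s$ is regular (hence invertible), one concludes $ab=0$, establishing the comorphism.

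The main obstacle I anticipate is handling the situation in which $S$ also contains zero-divisors of $A$, for then $sab = 0$ does not immediately give $ab=0$. To close the argument one would leverage the total structure of $A$, for instance by showing that any $s \in S$ annihilating $ab$ can be replaced with a regular (and hence invertible) element of $S$ enjoying the same property, or by exploiting a structural decomposition of total rings to reduce to a simpler case. Once strongness of $\zeta(\varphi_S)$ is established, the ``in particular'' clause follows by identifying $\zeta(S^{-1}A)$ with the quotient of $\zeta(A)$ modulo the equivalence relation on $V(\zeta(A))$ whose classes are the fibres of $\zeta(\varphi_S)$; the strong morphism property of this quotient projection is exactly the content of the strong epimorphism above.
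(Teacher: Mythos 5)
Your verification of the epimorphism half is correct and is exactly what the preceding Proposition asks for: $s/1$ is a unit of $S^{-1}A$, hence $\frac{a}{s}\sim_{S^{-1}A}\frac{a}{1}$, and Part 1 applies. The paper gives no separate argument for this corollary, so the intended proof can only be the combination of the two parts of that Proposition, which is the route you take.

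The gap you flag in the comorphism half, however, cannot be closed: neither of your proposed repairs (replacing a zero-divisor $s\in S$ with a regular element annihilating $ab$, or a structural reduction of total rings) can succeed, because the claim fails for multiplicatively closed sets containing zero-divisors, even over a total ring. Take $A=\ZZ/(6)$, which is total since its regular elements $\bar{1},\bar{5}$ are units, and $S=\{\bar{1},\bar{2},\bar{4}\}$, so that $S^{-1}A\simeq\ZZ/(3)$ and $\varphi_S(\bar{3})=\bar{0}$ (as $\bar{2}\cdot\bar{3}=\bar{0}$ with $\bar{2}\in S$). Then $\zeta(\varphi_S)$ sends $[\bar{3}]$ to $[\bar{0}]$ and $[\bar{1}]$ to $[\bar{1}]$; the pair $([\bar{0}],[\bar{1}])$ is an edge of $\zeta(\ZZ/(3),\sim)$, yet $([\bar{3}],[\bar{1}])$ is not an edge of $\zeta(\ZZ/(6),\sim)$ because $\bar{3}\cdot\bar{1}\neq\bar{0}$ (similarly $[\bar{0}]$ carries a loop while $[\bar{3}]$ does not). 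Hence $\zeta(\varphi_S)$ is not a comorphism. The only case covered by Part 2 of the Proposition is $S\subseteq A\smallsetminus\mc{D}(A)$, and for a total ring this forces $S\subseteq A^{\times}$, so $\varphi_S$ is an isomorphism and the corollary becomes vacuous. In short: read with $S$ arbitrary, the statement is false and no completion of your sketch exists; read with $S$ regular, it is an immediate but trivial consequence of the Proposition. Your instinct that zero-divisors in $S$ are the obstruction was exactly right, but the correct resolution is to change the hypotheses (e.g., require $S$ to consist of regular elements and drop totality, which yields a genuine statement for arbitrary rings), not to look for a further argument.
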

\noindent Observe that, in general, $\zeta(\varphi_S)$ is not an isomorphism, even when $\varphi_S$ is injective.
%For instance consider the inclusion $\ZZ\subseteq\QQ$.

\section{Applications to the finite case}\label{FIN}
It would be desirable to  have a structure theorem for $\zeta(A)=\zeta(A,\mc{R}_A)$ in case this is finite. Observe that its finitess depends on the chosen relation, as for instance $\zeta(\QQ,=)$ is infinite, whereas $\zeta(\QQ,\sim)$ is finite. Clearly, when  $A$ is finite then $\zeta(A)$ is finite, indipendently of the given $\mc{R}_A$; moreover, we know by Proposition \ref{finestblend} and Remark \ref{coarse&quot} then when  $\zeta(A)$ is finite then  $\zeta(A,\sim)$ is finite. Therefore,  if we give, and we do, a characterization in the case $\mc{R}_A=\sim_A$, then we will have the largest family of rings for which $\zeta(A)$ might be finite.

For the rest of the section let us fix $\{\mc{R}_A\}=\{\sim_A\}$. First of all, notice that, by definition, $\zeta(A)$ is finite exactly when $A$ has finitely many principal ideals, which immediately implies that $A$ is artinian.

\begin{theorem}\label{finitetheta}
Let $A$ be a ring. Then, the following are equivalent:
\begin{enumerate}[1.]
    \item $\zeta(A)$ is finite;
    \item $A$ has finitely many ideals;
    \item $A\simeq A_1\times A_2$, where $A_1$ is an artinian PIR and  $A_2$ is a finite ring.
\end{enumerate}
\end{theorem}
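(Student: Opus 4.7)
The plan is to establish the cyclic chain of implications $1 \Rightarrow 2 \Rightarrow 3 \Rightarrow 1$.

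For $1 \Rightarrow 2$: The finiteness of $\zeta(A)$ means exactly that $A$ has only finitely many principal ideals. Since every ideal $I$ of $A$ satisfies $I = \bigcup_{a \in I} (a)$ (as each $a \in I$ lies in $(a) \subseteq I$), the map $I \mapsto \{(a) : a \in I\}$ injects the set of ideals into the power set of principal ideals, bounding the number of ideals of $A$ by $2^m$ where $m$ is the (finite) number of principal ideals.

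For $3 \Rightarrow 1$: By Corollary \ref{cpfsim} applied to the family $\{\sim_A\}$, we have $\zeta(A) \simeq \zeta(A_1) \times \zeta(A_2)$. The graph $\zeta(A_2)$ is finite because $A_2$ is. The ring $A_1$, being an artinian PIR, decomposes as a finite product of local artinian PIRs $B_1, \ldots, B_k$; each $B_j$ has only the chain of ideals $(0) \subset \mathfrak{m}_j^{n_j - 1} \subset \cdots \subset \mathfrak{m}_j \subset B_j$, hence finitely many principal ideals, so $\zeta(A_1)$ is finite too.

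The main content sits in $2 \Rightarrow 3$. By the classical structure theorem for commutative artinian rings (applicable since finitely many ideals forces $A$ to be artinian), one decomposes $A \simeq B_1 \times \cdots \times B_n$ with each $B_i$ local artinian; as a quotient of $A$, each $B_i$ inherits the property of having finitely many ideals. The key claim is that every such $B_i$ is either a PIR or finite. Writing $\mathfrak{m}_i$ for the maximal ideal, $k_i = B_i/\mathfrak{m}_i$ for the residue field, and $V_i := \mathfrak{m}_i/\mathfrak{m}_i^2$, one distinguishes two cases: if $\dim_{k_i} V_i \leq 1$, Nakayama's lemma gives that $\mathfrak{m}_i$ is principal (or zero), so $B_i$ is a local artinian PIR; if $\dim_{k_i} V_i \geq 2$, the $k_i$-subspaces of $V_i$ inject into the set of ideals of $B_i$ lying between $\mathfrak{m}_i^2$ and $\mathfrak{m}_i$, and the finiteness of the latter forces $V_i$ to have only finitely many subspaces, which in dimension $\geq 2$ forces $k_i$ to be finite; since $\mathfrak{m}_i^N = 0$ for large $N$ and each composition factor of $B_i$ (as a $B_i$-module) is a copy of $k_i$, the ring $B_i$ is itself finite. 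To conclude, one groups the PIR factors into $A_1$ and the (purely) finite factors into $A_2$: a finite product of local artinian PIRs is an artinian PIR (ideals of a product split componentwise, and each component ideal is principal), and a finite product of finite rings is finite.

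The step I expect to require the most care is the vector-space dichotomy inside $2 \Rightarrow 3$, in particular the observation that a $k$-vector space of dimension at least two has infinitely many one-dimensional subspaces whenever $k$ is infinite, so that finitely many subspaces pins $k$ down to a finite field. Everything else is a clean repackaging via Corollary \ref{cpfsim} together with standard results on the structure of commutative artinian rings.
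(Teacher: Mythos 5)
Your proof is correct and follows essentially the same route as the paper's: the same bound on $\mc{I}_A$ via $\mc{PI}_A$ for $1\Rightarrow 2$, the same decomposition into local artinian factors together with the dichotomy on $\dim_k(\mf{m}/\mf{m}^2)$ and the finiteness of the residue field for $2\Rightarrow 3$ (which the paper largely outsources to \cite{Z14}), and the same appeal to Corollary \ref{cpfsim} for $3\Rightarrow 1$. You merely spell out in full some steps the paper cites as known.
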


\begin{proof}$1 \Rightarrow 2$: We already observed that $\mc{PI}_A$ is finite and $A$ artinian; since every element of $\mc{I}_A$ is a finite sum of principal ideals, the conclusion follows.

  \smallskip
  \noindent
  $2 \Rightarrow 3$: This is contained in \cite{Z14}, Theorem 3.3. Observe that $A$ is artinian and can be written as a finite product of artinian local rings, which all have a finite number of ideals. It is thus sufficient to show that each local factor of $A$ is either a PIR or a finite ring.  By contradiction, say that a factor $(B,\mf{m},k)$ of $A$ is infinite and not PIR.  It is well known that a local ring is finite if and only if $A$ is artinian with finite residue field and this is principal if and only if the $k$-vector space $\mf{m}/\mf{m}^2$ has dimension $>1$. We may thus conclude that $Q$ has infinitely many ideals, which is the sought after contradicion.
  
  \smallskip
  \noindent
$3 \Rightarrow 1$: By Corollary \ref{cpfsim} we have that $\zeta(A)\simeq\zeta(A_1)\times\zeta(A_2)$, where $\zeta(A_2)$ is finite. We only need to show that  $\zeta(A_1)$ is finite as well. Since an artinian PIR is a finite product of local artinian PIRs, we may assume that $A_1$ is local. Thus, every ideal of $A_1$ can be written as $(p^i)$, for some $p\in A_1$, and $p^n=0$ for some natural number $n\geq 1$ from which it follows that  $\mc{PI}_A$ is finite.
 \end{proof}

\noindent
The previous theorem, together with Corollary \ref{cpfsim}, yields a reduction to the case of local artinian rings, since we can reconstruct $\zeta(A)$ from the graphs which are associated with its local factors.

A family of graphs which turns out to be important to this end is staircases graphs, i.e. those whose adjacency matrix is an ascending staircase of 1's. In other words, a graph with  vertex set $V=\{0,\dots, k\}$ is called {\em staircase graph of index $k$}, and it is denoted by $SG_k$, when $$i \text{\, is adjacent  to\, } j   \text{\,\, if and only if \,\,} i+j\geq k.$$ 
\noindent
Observe that in $SG_k$ it holds that $\{k\}=N(0)\subset N(1)\subset\dots\subset N(k)=\{0,\dots,k\}$ and, thus,  $1=\deg(0)<\deg(1)<\dots<\deg(k)=k+1$.

\noindent
These graphs are uniquely determined, up to isomorphisms, by the degree of the vertices. In fact, in \cite{DJS21} Proposition 4.1 the following was proved: a graph $G=(\{v_0,\ldots,v_k\},E)$, with $k>0$,  such that $\deg(v_i)\in\{1,\ldots,k+1\}$ for each $i$ and $\deg(v_i)\neq \deg(v_j)$  whenever $i\neq j$, is isomorphic to  $SG_k$. Moreover, since  $SG_k\simeq \zeta(\ZZ/(p^k))$  and $\ZZ/(p^k)$ is a local ring, any $A$ such that $\zeta(A)\simeq SG_k$ is local as well, cf. \cite{DJS21}, Proposition 4.2 and Corollary 5.1.

Before we proceed with the next lemma, we observe the following. Fix $x\in A$; the map $m_x:\zeta(A)\rightarrow\zeta(A)$ defined as $(a)\mapsto (xa)$ is a graph morphism. Moreover, for every $z\in A$ we have that $(z)\in N((xa))\iff z\in \Ann(xa)\iff zx\in\Ann(a)\iff (zx)\in N((a))$.
Thus, if $a,b\in A$ are such that $N((a))\subseteq N((b))$ then $N((xa))\subseteq N((xb)).$

\begin{lemma}\label{SGmf}
Let $A$ be a ring, $a\in A$ and  $\varphi \,:\, SG_k\longrightarrow \zeta(A)$ a surjective strong morphism, for some integer $k\geq 0$; then, for all $0\leq i\leq j\leq k$, it holds that  $N(x\varphi(i))\subseteq N(x\varphi(j))$.
\end{lemma}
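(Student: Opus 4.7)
My plan is to reduce the claim to a purely combinatorial statement about $SG_k$ and then transfer it through $\varphi$ using the strong morphism property, before finally invoking the pre--lemma observation about multiplication by $x$ (I am reading the $x$ in the statement as the $a$ fixed at the start of the hypothesis).

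First I would pin down the neighborhoods in the source. In $SG_k$, vertex $i$ is adjacent to vertex $m$ exactly when $m + i \geq k$, so $N(i) = \{k-i,\, k-i+1,\, \ldots,\, k\}$. In particular, whenever $0 \leq i \leq j \leq k$, we have the chain of inclusions $N(i) \subseteq N(j)$ in $SG_k$. This is exactly the ``staircase'' property already recorded in the paragraph introducing $SG_k$.

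Next I would transport this to $\zeta(A)$. Take a vertex $(z) \in N(\varphi(i))$; since $\varphi$ is surjective, there exists $m \in \{0, \ldots, k\}$ with $\varphi(m) = (z)$. Because $\varphi$ is a \emph{strong} morphism, $(\varphi(m), \varphi(i)) \in E(\zeta(A))$ forces $(m, i) \in E(SG_k)$, i.e.\ $m + i \geq k$. Since $j \geq i$, this gives $m + j \geq k$, so $(m, j) \in E(SG_k)$ and then, by the morphism part of ``strong'', $(\varphi(m), \varphi(j)) = ((z), \varphi(j))$ is an edge of $\zeta(A)$. Hence $(z) \in N(\varphi(j))$ and we conclude $N(\varphi(i)) \subseteq N(\varphi(j))$.

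Finally I would apply the observation stated immediately before the lemma: for any $b, c \in A$, the equivalence $(z) \in N((xb)) \iff (zx) \in N((b))$ shows that $N((b)) \subseteq N((c))$ implies $N((xb)) \subseteq N((xc))$. Applying this to representatives of $\varphi(i)$ and $\varphi(j)$ yields $N(x\varphi(i)) \subseteq N(x\varphi(j))$, which is the desired conclusion. There is essentially no obstacle here; the only point that requires a little care is making sure one uses both implications of ``strong'' when lifting an element of $N(\varphi(i))$ through $\varphi$, which is why surjectivity of $\varphi$ enters the argument.
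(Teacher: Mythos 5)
Your proof is correct and follows essentially the same route as the paper's: reduce to $N(\varphi(i))\subseteq N(\varphi(j))$ via the pre-lemma observation on multiplication by $x$, lift an element of $N(\varphi(i))$ through $\varphi$ using surjectivity, pull the edge back with the comorphism half of ``strong'', use the staircase inclusion $N(i)\subseteq N(j)$ in $SG_k$, and push forward with the morphism half. Your reading of the $x$ in the conclusion as the element fixed in the hypothesis is also the intended one.
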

\begin{proof}
It is enough to prove that $N(\varphi(i))\subseteq N(\varphi(j))$. By hypothesis, every vertex in $N(\varphi(i))$ is the image of a vertex $l\in\{0,\dots, k\}$; since  $\varphi$ is a  comorphism, $l$ is adjacent to $i$ and  $i\leq j$ implies that  $l\in N(i)\subseteq N(j)$. Thus, $\varphi(l)\in N(\varphi(j))$.
\end{proof}

\begin{lemma}\label{propAnn1}
Let $A\neq\tf{0}$ be a ring such that  $\mc{D}(A)=\mc{N}(A)$; then, for all  $a,b\in\mc{D}^*(A)$ then  $\Ann(a)\cup \Ann(b)\subsetneq \Ann(ab)$. 
\end{lemma}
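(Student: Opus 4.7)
The plan is to exhibit an element $x \in \Ann(ab) \setminus \big(\Ann(a) \cup \Ann(b)\big)$, which is exactly what the strict inclusion requires (the containment $\Ann(a) \cup \Ann(b) \subseteq \Ann(ab)$ being immediate). The hypothesis $\mc{D}(A) = \mc{N}(A)$ forces $a$ and $b$ to be non-zero nilpotents, and this is the structural fact I will exploit throughout.

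First I would dispose of the easy subcase $ab = 0$: here $\Ann(ab) = A$, and since $a, b \neq 0$ the unit $1$ does not lie in $\Ann(a) \cup \Ann(b)$, yielding the claim at once.

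Assume then $ab \neq 0$. Using the nilpotence of $a$ and $b$, introduce the minimal exponents
$$n' = \min\{k \geq 1 \,:\, a^k b = 0\}, \quad m' = \min\{k \geq 1 \,:\, a b^k = 0\},$$
both finite and, since $ab \neq 0$, both at least $2$. The natural candidates are $a^{n'-1}$ and $b^{m'-1}$: each annihilates $ab$, and by the minimality of $n'$ and $m'$ one has $a^{n'-1} b \neq 0$ and $a b^{m'-1} \neq 0$, so $a^{n'-1} \notin \Ann(b)$ and $b^{m'-1} \notin \Ann(a)$. If $a^{n'} \neq 0$, then $x := a^{n'-1}$ also avoids $\Ann(a)$ and we are done; symmetrically, the case $b^{m'} \neq 0$ is handled by $x := b^{m'-1}$.

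The delicate remaining case, which I expect to be the main obstacle, is when simultaneously $a^{n'} = 0$ and $b^{m'} = 0$, since then each single candidate lies in one of the two annihilators. I would resolve this by considering the sum
$$x = a^{n'-1} + b^{m'-1}.$$
A direct check gives $xab = a^{n'} b + a b^{m'} = 0$, while $xa = a^{n'} + a b^{m'-1} = a b^{m'-1} \neq 0$ and $xb = a^{n'-1} b + b^{m'} = a^{n'-1} b \neq 0$ by the previous vanishings and minimality; hence $x$ lies outside both $\Ann(a)$ and $\Ann(b)$, completing the proof.
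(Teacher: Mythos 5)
Your proof is correct, and it takes a genuinely different route from the paper's. The paper argues by contradiction: assuming $\Ann(a)\cup\Ann(b)=\Ann(ab)$, it sketches a double induction on $k+l$ showing $a^kb^l\neq 0$ and $\Ann(a^kb^l)=\Ann(a^k)\cup\Ann(b^l)$ for $k<n$, $l<m$ (with $n,m$ the nilpotency indices), and then derives a contradiction from the square-zero elements $\alpha=a^{n-1}$, $\beta=b^{m-1}$ via $\alpha+\beta\in\Ann(\alpha\beta)=\Ann(\alpha)\cup\Ann(\beta)$, which forces $\alpha\beta=0$. You instead give a direct, constructive proof: you exhibit an explicit element of $\Ann(ab)\smallsetminus\big(\Ann(a)\cup\Ann(b)\big)$, using the minimal exponents $n'=\min\{k\geq 1: a^kb=0\}$ and $m'=\min\{k\geq 1: ab^k=0\}$ --- finite precisely because $\mc{D}(A)=\mc{N}(A)$ makes $a$ and $b$ nilpotent, and at least $2$ once the trivial case $ab=0$ is dispatched --- and, in the only delicate subcase $a^{n'}=b^{m'}=0$, the sum $a^{n'-1}+b^{m'-1}$. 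This is the same ``sum of extremal powers'' device the paper deploys at the very end, but you apply it directly to $ab$ with no inductive reduction; your computations $x\cdot ab=0$, $x\cdot a=ab^{m'-1}\neq 0$, $x\cdot b=a^{n'-1}b\neq 0$ are all correct, and your three cases exhaust all possibilities. Your argument is more elementary and fully written out, whereas the paper's induction is only sketched; what the paper's version buys is the intermediate identity $\Ann(a^kb^l)=\Ann(a^k)\cup\Ann(b^l)$, which, however, is not reused elsewhere.
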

\begin{proof} It is well-known that  $\Ann(a)\cup\Ann(b)\subseteq\Ann(ab)$; therefore, let us assume by contradiction that there exist $a,b\in \mc{D}^*(A)$ such that $\Ann(a)\cup \Ann(b)=\Ann(ab)$ and let $n,m\geq 1$ be  the nilpotency indexes of $a$ and $b$, respectively.
By inducting on $k+l$, one can easily show first that $a^kb^l\neq 0$ for all  $k\in\{0,\dots,n-1\}$,  $l\in\{0,\dots, m-1\}$, and secondly that   
 $\Ann(a^kb^l)=\Ann(a^k)\cup \Ann(b^l)$, for all such $k,l$.

Now, if we let $\alpha=a^{n-1}$ and $\beta =b^{m-1}$, then  $\alpha,\, \beta,\, \alpha\beta \neq 0$ and  $\alpha^2=\beta^2=0$;  accordingly,  $(\alpha+\beta)\in \Ann(\alpha\beta)=\Ann(\alpha)\cup \Ann(\beta)$ and, thus, $\alpha\beta=0$, contradiction.
\end{proof}

\noindent
As an easy consequence, one characterizes local artinian rings by means  annihilator ideals, cf. \cite{DJS21}, Theorem  5.3.

\begin{corollary}\label{cla}
Let $A\neq\tf{0}$ be artinian; then $A$ is local if and only if for all $a,b\in\mc{D}^*(A)$ it holds that  $\Ann(a)\cup \Ann(b)\subsetneq \Ann(ab)$.
\end{corollary}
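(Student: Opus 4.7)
The plan is to split the equivalence into two largely independent arguments, the first of which is essentially a citation. The forward implication is immediate from Lemma~\ref{propAnn1}: if $(A,\mathfrak{m})$ is local artinian, then $\mathfrak{m}$ is nilpotent, so every non-unit of $A$ is simultaneously a zero-divisor and a nilpotent, i.e.\ $\mc{D}(A)=\mathfrak{m}=\mc{N}(A)$. Lemma~\ref{propAnn1} then delivers the strict inclusion $\Ann(a)\cup\Ann(b)\subsetneq\Ann(ab)$ for every pair $a,b\in\mc{D}^*(A)$, and there is nothing else to do.

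For the converse I would argue by contraposition and exhibit an explicit offending pair. Assume that $A$ is artinian and not local; by the classical structure theorem for artinian rings one has $A\simeq A_1\times\cdots\times A_n$ with $n\geq 2$ and each $A_i$ a nonzero (local) artinian ring. I would then consider the nontrivial idempotent $a=b:=(1_{A_1},0,\ldots,0)\in A$. Since $a$ is annihilated by $(0,1_{A_2},0,\ldots,0)\neq 0$ and is itself nonzero, $a\in\mc{D}^*(A)$. Idempotency gives $ab=a^2=a$, whence $\Ann(a)\cup\Ann(b)=\Ann(a)=\Ann(ab)$, so the strict inclusion fails and we obtain the desired contradiction.

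I do not anticipate any real obstacle here. The only point worth flagging is that the quantifier \emph{for all $a,b\in\mc{D}^*(A)$} in the statement permits the diagonal choice $a=b$; once this is noticed, the existence of any nontrivial idempotent, which for artinian rings is equivalent to the failure of locality via the decomposition into local factors, instantly destroys the strict inclusion. The decomposition of an artinian ring as a finite product of local artinian rings, although not formally recorded in the excerpt, is standard and may be invoked freely.
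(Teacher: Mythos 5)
Your proposal is correct and matches the paper's argument essentially verbatim: the forward direction invokes $\mc{D}(A)=\mc{N}(A)=\mf{m}$ for a local artinian ring and applies Lemma~\ref{propAnn1}, while the converse uses the structure theorem to split $A$ as a nontrivial product and takes the diagonal idempotent $a=b=(1,0,\dots,0)$ to violate the strict inclusion. No gaps.
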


\begin{proof} If $A$ were not local, the Structure Theorem for artinian rings would yields that  $A\simeq A_1\times A_2$, for some rings $A_1,A_2\neq \tf{0}$; also, the elements $a=b=(1,0)$ would contradict the hypothesis.   Viceversa, when $A$ is local artinian one has $\mc{D}(A)=\mc{N}(A)=\mf{m}$, and the conclusion follows directly by the previous lemma.
\end{proof}

Recall that, when $A$ is local, one has that  $(a), (b)\in\mc{PI}_A$ are equal if and only if $a\approx b$.

\begin{lemma}\label{Thetastair}
Let $A$ be a local artinian  ring and  $\varphi \,:\, SG_k\longrightarrow \zeta(A)$ be an isomorphism for some $k>0$.
\begin{enumerate}[1.]
    \item Let $x\in A$; if there exist $0\leq i<j\leq k$ such that $\Ann(x\varphi(i))=\Ann(x\varphi(j))$, then $x\varphi(i)=x\varphi(j)=0;$
    \item $\varphi(i)=\varphi(1)^i$, for all  $i\in\{0,\dots, k\}.$
\end{enumerate}
\end{lemma}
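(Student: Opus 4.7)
My approach hinges on a preliminary observation: because $\varphi$ is a graph isomorphism onto $SG_k$, in $\zeta(A)$ the neighborhoods $N(\varphi(0)) \subsetneq N(\varphi(1)) \subsetneq \dots \subsetneq N(\varphi(k))$ are strictly nested, and each $N(\varphi(l))$ is in bijection with the set of principal ideals contained in $\Ann(\varphi(l))$. I plan to exploit this to deduce that the annihilators $\Ann(\varphi(0)) \subsetneq \ldots \subsetneq \Ann(\varphi(k))$ form a strict chain of $k+1$ ideals and that the assignment $(\varphi(l)) \mapsto \Ann(\varphi(l))$ is a bijection. In particular, an equality of annihilators of two elements of $A$ becomes equivalent to an equality of the principal ideals they generate.

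For Part 1, I will argue by contradiction, assuming $x\varphi(i) \neq 0$. The observation gives $(x\varphi(i)) = (x\varphi(j))$; choosing representatives $a_l \in \varphi(l)$ and using that $A$ is local, I extract a unit $u$ with $xa_j = u\, xa_i$, whence $x(a_j - ua_i)=0$. For $i=0$, the element $a_j - u$ is a unit (a maximal-ideal element minus a unit in a local ring), so $x = 0$, contradicting $xa_0 = x \neq 0$. For $i \geq 1$, I note $x \in \mc{D}^*(A)$ (otherwise $x$ is a unit and the function $\psi(l) := $ index of $[xa_l]$ is the identity, forbidding $\psi(i)=\psi(j)$). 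Lemma~\ref{SGmf} then gives $\psi$ non-decreasing, and Lemma~\ref{propAnn1} applied to $x$ and $a_l$ in $\mc{D}^*(A)$ yields the strict bound $\psi(l) > l$ whenever $1 \leq l \leq k-1$ and $\psi(l) < k$. Iterating this bound together with the nondecreasingness of $\psi$ and the finiteness of the chain of annihilators will rule out any equality $\psi(i)=\psi(j)$ with $m<k$, completing the contradiction.

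For Part 2 I plan to induct on $i$, the cases $i = 0, 1$ being immediate. For the step, I apply Lemma~\ref{SGmf} with $x = a_1$: the chain $\Ann(a_1 a_0) \subseteq \dots \subseteq \Ann(a_1 a_{k-2})$ is non-decreasing. Since $\varphi(1)$ is adjacent in $SG_k$ only to $\varphi(k-1)$ and $\varphi(k)$, each product $a_1 a_l$ with $l \leq k-2$ is nonzero, so Part 1 (applied with $x = a_1$) upgrades the chain to a strict one of length $k-1$. Lemma~\ref{propAnn1} on $a_1, a_l$ additionally gives $\Ann(a_l) \subsetneq \Ann(a_1 a_l)$, so each $\Ann(a_1 a_l)$ strictly exceeds $\Ann(a_l)$ in the chain. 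The $k-1$ strictly increasing values must then fill exactly the slots $\Ann(a_1), \ldots, \Ann(a_{k-1})$, yielding $(a_1 a_l) = (a_{l+1})$ for $l \in \{0, \ldots, k-2\}$. Combined with the inductive hypothesis $(a_l) = (a_1^l)$, this gives $(a_1^{l+1}) = (a_1 a_l) = (a_{l+1})$; the final case $i = k$ follows from $a_1 a_{k-1} = 0$ (adjacency in $SG_k$) giving $(a_1^k) = 0 = (a_k)$.

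The hard part will be the iteration in Part 1: the lone bound $\psi(l) > l$ does not immediately preclude a repeat $\psi(i) = \psi(j) = m < k$, and I expect to need to combine it carefully with the monotonicity of $\psi$ and the exact count of the $k+1$ available annihilator slots to force $\psi$ to be injective on $\psi^{-1}(\{0, \ldots, k-1\})$. Once Part 1 is secured, Part 2 reduces to a clean dimension count via the principal-ideal/annihilator bijection established in the preliminary observation.
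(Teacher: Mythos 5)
Your setup (the strict chain $\Ann(\varphi(0))\subsetneq\cdots\subsetneq\Ann(\varphi(k))$, the bijection between vertices and annihilators, hence ``equal annihilators $\Rightarrow$ equal principal ideals'') and your Part 2 are essentially the paper's argument: the paper also multiplies the chain by $a_1$, uses Lemma \ref{SGmf} for monotonicity, Part 1 to make the chain strict, Corollary \ref{cla} (i.e.\ Lemma \ref{propAnn1}) for the lower bound, and a count of the $k-1$ available slots to conclude $(a_1a_l)=(a_{l+1})$. That part is fine.

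The genuine gap is exactly the step you flag in Part 1 for $i\ge 1$, and it cannot be closed by the tools you list. The constraints you have on $\psi(l)=$ index of $(xa_l)$ are: $\psi$ non-decreasing, $\psi(l)>l$ for $1\le l\le k-1$, $\psi(k)=k$, $\psi(0)\in\{1,\dots,k-1\}$. These do not force injectivity below $k$: for $k=4$ the abstract function $\psi=(1,3,3,4,4)$ satisfies all of them yet has $\psi(1)=\psi(2)=3<k$. So ``monotonicity plus $\psi(l)>l$ plus counting the $k+1$ slots'' is provably insufficient; some further ring-theoretic input is required, and your proposal does not identify it. The paper's missing ingredient is this: writing $xa_i=u\,xa_j$ with $u$ a unit gives $a_i-ua_j\in\Ann(x)$; picking a witness $y\in\Ann(a_j)\smallsetminus\Ann(a_i)$ gives $(a_i-ua_j)y=a_iy\neq 0$, so $a_i-ua_j\notin\Ann(y)$. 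Since every element of $A$ generates some $\varphi(l)$, the annihilators $\Ann(x)$ and $\Ann(y)$ are \emph{comparable} (this is the total order you already established but did not exploit), and $a_i-ua_j\in\Ann(x)\smallsetminus\Ann(y)$ forces $\Ann(y)\subsetneq\Ann(x)$; then $a_j\in\Ann(y)\subseteq\Ann(x)$ yields $xa_j=0$, the desired contradiction. Without this (or an equivalent) argument, Part 1 --- and hence the strictness of the chain on which your Part 2 count relies --- is unproved.
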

\begin{proof} For all $i,j\in\{0,\dots, k\}$,\, if\,  $\varphi(i)=\varphi(j)$ then $N(\varphi(i))=N(\varphi(j))$, i.e. $\Ann(\varphi(i))=\Ann(\varphi(j))$, and the viceversa holds since $\varphi$ is an isomorphism. The graph $SG_k$ is staircase and, thus,  $i<j$ if and only if $N(i)\subset N(j)$; this is equivalent to  $N(\varphi(i))\subset N(\varphi(j))$, which in turn means  $\Ann(\varphi(i))\subset\Ann(\varphi(j))$. For all  $i\in\{0,\dots,k\}$, let $\varphi(i)=(a_i).$

  \smallskip
  \noindent
1. Assume that $\Ann(xa_i)=\Ann(xa_j)$; then  $(xa_i)=(xa_j)$ and there exists $u\in A^*$ such that $a_i-ua_j\in\Ann(x)$. Moreover, since $i<j$, we have that $\Ann(a_i)\subset\Ann(a_j)$ and there exists $y\in\Ann(a_j)\smallsetminus\Ann(a_i)$; thus, $(a_i-ua_j)y=a_iy\neq 0$ and it follows that $a_i-ua_j\not\in\Ann(y)$.
  
    Since $\varphi^{-1}\big((x)\big),\varphi^{-1}\big((y)\big)\in V(SG_k)$, we have that either  $N(\varphi^{-1}\big((x)\big))\subseteq N(\varphi^{-1}\big((y)\big))$,\, or\,  $N(\varphi^{-1}\big((x)\big))\supseteq N(\varphi^{-1}\big((y)\big))$ and, accordingly, either $\Ann(x)\subseteq\Ann(y)$\, or\,  $\Ann(y)$ $\subseteq\Ann(x)$. Since  $a_i-ua_j\in\Ann(x)\smallsetminus\Ann(y)$, we must have $\Ann(y)\subset\Ann(x)$ and, by our choice of $y$, it follows that  $0=x(a_j)=x(a_i)$.

    \smallskip
    \noindent
    2. The cases $i=0,1$ are easy: $(a_1)\!=\!(a_1)^1$ and, since $\Ann(a_0)\!=0\!=\!\Ann(1)$, it follows that  $(a_0)\!=\!(1)\!=\!(a_1)^0$. Thus, if $k=1$ we are done and may assume $k>1$. We are going to prove that $(a_{i+1})=(a_1)(a_i)$ for all $1\leq i<k$, and that implies the conclusion.

    First of all, observe that  $N(j)\!\neq\!N(0)$ for all $j\!\in\!\{1,\dots, k\!-\!1\}$. This implies that  $\Ann(a_j)\neq\Ann(a_0)=0$ and, thus,  $a_j\in \mc{D}(A)$; moreover,  $(a_j)\!\neq\!(a_k)=0$,\, i.e.\,   $a_j\in\mc{D}^*(A)$. By Corollary \ref{cla}, we have that  $\Ann(a_1a_j)\!\supset\!\Ann(a_j)$ and  $N((a_1a_j))\!\supset\!N((a_j))$; therefore  $\deg((a_1a_j))\!>\!\deg((a_j))\!=\!j+1$ for all  $j\!\in\!\{1,\dots, k\!-\!1\}$.
    
    Now take $j, l\in\{0,\dots,k\}$ with $j\neq l$; by 1, if $\Ann(a_1a_j)=\Ann(a_1a_l)$ then $a_1a_j=a_1a_l=0$ and, since $\varphi$ is an isomorphism, the only possible cases are when $j,l\in\{k\!-\!1,k\}$.  Moreover, by Lemma \ref{SGmf} we get that  $N((a_1a_i))\!\subset\! N((a_1a_{i+1}))\!\subset\!N((a_1a_{i+2}))\!\subset\dots\subset\!N((a_1a_{k-2}))\!\subset\! N((a_1a_{k-1}))=N((a_1a_k))=\mc{PI}_A$, where the last equality is due to the fact that  $a_k=0$. Taking the degrees in  the above chain of inclusions, we obtain $\deg((a_1a_i))\, <\, \deg((a_1a_{i+1}))\, <\, \deg((a_1a_{i+2}))\,< \,\dots\,<\,\deg((a_1a_{k-2}))\, < \,\deg((a_1a_{k-1}))=\deg((a_1a_k))=k+1$. We know that $\deg((a_1a_i))\!>\!i+1$; on the other hand  $\deg((a_1a_i))\leq i+2$, otherwise we would have  $\deg((a_1a_{k-1}))\!>\!i+2+(k\!-\!1-i)=k+1$, which is a contradiction. Therefore, $(a_1a_i)$ has degree $i+2$ and, therefore,  $(a_1a_i)=(a_{i+1})$, as desired.

\end{proof}

\begin{proposition}\label{PIRloc}
Let $(A,\mf{m})$ be a local artinian ring. Then, $A$ is PIR if and only if $\zeta(A)\simeq SG_k$, where $k$ is the nilpotency index of  $\mf{m}$.
\end{proposition}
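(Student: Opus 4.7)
The plan is to prove the two implications separately, with the forward one being essentially a direct calculation and the backward one relying on Lemma \ref{Thetastair}.2.

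For the forward direction, suppose $A$ is a local artinian PIR with $\mf{m} = (p)$, and let $k$ be the nilpotency index of $\mf{m}$. Every ideal is then of the form $(p^i)$ for some $0 \le i \le k$, and the ideals $(p^0), (p^1), \ldots, (p^k) = (0)$ are pairwise distinct, yielding exactly $k+1$ vertices in $\zeta(A)$. The map $i \mapsto (p^i)$ sends $SG_k$ bijectively onto $V(\zeta(A))$, and since $(p^i)(p^j) = (p^{i+j}) = 0$ if and only if $i+j \ge k$, this is a graph isomorphism $SG_k \simeq \zeta(A)$.

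For the backward direction, fix an isomorphism $\varphi\colon SG_k \to \zeta(A)$. First I would identify the endpoints: in $SG_k$, the vertex $0$ is the unique vertex of degree $1$ (its only neighbor being $k$), while in $\zeta(A)$ the unique vertex of degree $1$ must be $(1)$, since $\Ann(1) = 0$ makes $(0)$ its only neighbor. Hence $\varphi(0) = (1)$ and $\varphi(k) = (0)$. Writing $\varphi(1) = (p)$, Lemma \ref{Thetastair}.2 gives $\varphi(i) = (p^i)$ for all $0 \le i \le k$. Since $\varphi$ is bijective on vertex sets, the principal ideals of $A$ are exactly $(1), (p), (p^2), \ldots, (p^k) = (0)$, forming a descending chain.

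Now I deduce that $A$ is a PIR. Because $A$ is artinian (hence noetherian), every ideal is a finite sum of principal ideals; but any finite sum $(p^{i_1}) + \cdots + (p^{i_n})$ with entries drawn from the above chain equals $(p^{\min_j i_j})$, which is again principal. Therefore every ideal of $A$ is principal. Finally, since $A$ is local, $\mf{m}$ is the unique maximal proper ideal and must coincide with $(p)$; moreover $p^k \in \varphi(k) = (0)$ but $p^{k-1}$ generates $\varphi(k-1) \neq (0)$, so $p^k = 0$ and $p^{k-1} \neq 0$, showing that $k$ is precisely the nilpotency index of $\mf{m}$.

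The only delicate step is the identification $\varphi(0)=(1)$, $\varphi(k)=(0)$, which uses the characterization of $SG_k$ by its degree sequence; once these anchor points are fixed, Lemma \ref{Thetastair}.2 hands us the power structure $\varphi(i) = (p^i)$, and the PIR conclusion follows formally from the fact that the set of principal ideals is totally ordered by inclusion and closed under finite sums.
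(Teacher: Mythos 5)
Your proof is correct and follows essentially the same route as the paper: the forward direction is the direct computation with powers of the maximal ideal, and the backward direction invokes Lemma \ref{Thetastair}.2 to see that all principal ideals are powers of $\varphi(1)$ and then uses finite generation to conclude every ideal is principal. The identification $\varphi(0)=(1)$ that you flag as delicate is in fact already contained in Lemma \ref{Thetastair}.2 (the case $i=0$ gives $\varphi(0)=\varphi(1)^0=(1)$), so no extra argument is needed there.
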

\begin{proof}
  Since $A$ is PIR, it is well-known that every  ideal $I\subseteq A$ can be written as $I=\mf{m}^i$ and $\mf{m}^i$ is adjacent to $\mf{m}^j$ if and only if $i+j\geq k$, where  $k$ is the nilpotency index of  $\mf{m}$; it is easy to conclude that the map  $\varphi \,:\, SG_k\longrightarrow \zeta(A)$ defined by $\varphi(i)=\mf{m}^i$ is an isomorphism.
  
Viceversa, by Lemma \ref{Thetastair}.2, every principal ideal is a power of a fixed principal ideal $(x)$; since $A$ is noetherian, it follows that every ideal is a power of $(x)$ and that  $A$ is PIR.
\end{proof}

\begin{example}
  Let $A= \ZZ/(4)[x]/(2x,x^2-2)$ and $B=\mathbb{F}_2[x,y]/(x^2,xy,y^2)$. Then, $A$ is a local ring which is PIR by Proposition \ref{PIRloc}, since  $\zeta(A)\simeq SG_3$; since  $\zeta(\mathbb{F}_2[x,y]/(x^2,xy,y^2))$ has four distinct vertices with a loop, namely  $(\bar{x})$, $(\bar{y})$, $(\bar{x}+\bar{y})$ and $(\bar{0})$,  in contrast to $SG_3$ which has only two vertices with a loop.
  
\begin{center}
    \includegraphics[scale=0.3]{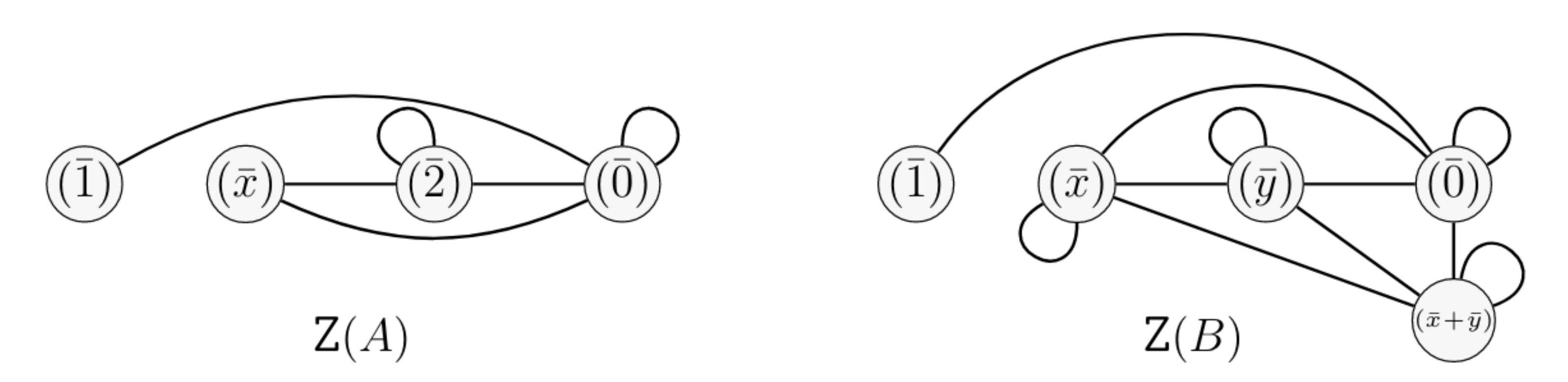}
\end{center}
\end{example}

\begin{theorem}\label{thm: caratterizzazione PIR}
An artinian ring  $A$ is PIR if and only if $\zeta(A)$ is isomorphic to a finite product of staircases graphs.
\end{theorem}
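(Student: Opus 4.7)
The forward implication is direct. If $A$ is an artinian PIR, the Structure Theorem for artinian rings (together with the fact that a product of PIRs inherits the PIR property) gives $A \simeq A_1 \times \cdots \times A_n$ with each $A_i$ a local artinian PIR. By Corollary \ref{cpfsim}, applied to the family $\{\sim_A\}$, we have $\zeta(A) \simeq \zeta(A_1) \times \cdots \times \zeta(A_n)$, and by Proposition \ref{PIRloc} each $\zeta(A_i)$ is isomorphic to a staircase graph $SG_{k_i}$, where $k_i$ is the nilpotency index of the maximal ideal of $A_i$. Hence $\zeta(A)$ is a finite product of staircase graphs.

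For the converse, I would argue by induction on the number $n$ of staircase factors appearing in $\zeta(A) \simeq SG_{k_1} \times \cdots \times SG_{k_n}$, where without loss of generality we may assume $k_i \geq 1$ for all $i$ (indices $0$ give the trivial graph $\zeta(\tf{0})$ which may be absorbed). The base case $n=1$ uses the fact, recalled in the paragraph preceding Lemma \ref{SGmf}, that $\zeta(A) \simeq SG_k$ forces $A$ to be local; Proposition \ref{PIRloc} then yields that $A$ is PIR.

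For the inductive step, the tool is Corollary \ref{totale}, which requires $A$ to be total. Here one uses the standard fact that every artinian ring is total: if $a \in A$ is regular, the descending chain $(a) \supseteq (a^2) \supseteq \cdots$ stabilizes, so $a^n(1-ab)=0$ for some $b \in A$ and some $n$, whence regularity of $a^n$ forces $ab=1$. Writing $\zeta(A) \simeq SG_{k_1} \times \bigl(SG_{k_2} \times \cdots \times SG_{k_n}\bigr)$ with both factors non-isomorphic to $\zeta(\tf{0})$, Corollary \ref{totale} produces a decomposition $A \simeq A_1 \times A_2$ with $\zeta(A_1) \simeq SG_{k_1}$ and $\zeta(A_2) \simeq SG_{k_2} \times \cdots \times SG_{k_n}$. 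Both $A_1$ and $A_2$ are quotients of $A$, hence artinian, so the inductive hypothesis applies and each $A_i$ is a PIR; therefore $A \simeq A_1 \times A_2$ is a PIR as well.

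The only real delicacy is making sure the hypotheses of Corollary \ref{totale} are in force at each inductive step: specifically, checking that $A$ is total (supplied by artinianness) and that the two factors on the right-hand side are non-trivial zero-divisor graphs, which is guaranteed by our normalization $k_i \geq 1$ and the fact that $SG_k$ with $k \geq 1$ has at least two distinct vertices and is not isomorphic to $\zeta(\tf{0})$. Everything else is bookkeeping built on top of Corollary \ref{cpfsim}, Corollary \ref{totale}, and Proposition \ref{PIRloc}.
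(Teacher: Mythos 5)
Your proposal is correct and follows essentially the same route as the paper: both directions rest on Corollary \ref{cpfsim}, Proposition \ref{PIRloc}, the totality of artinian rings, and repeated use of Corollary \ref{totale} to transfer the graph factorization $\zeta(A)\simeq SG_{k_1}\times\cdots\times SG_{k_n}$ (realized by $\bigtimes_i\zeta(\ZZ/(p_i^{k_i}))$) into a ring factorization of $A$. Your explicit induction, with locality of the single-factor case supplied by the fact recalled before Lemma \ref{SGmf}, merely makes precise the iteration that the paper performs in one step before checking locality of each factor by contradiction.
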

\begin{proof}
If $A$ is PIR, then the conclusion follows by a straightforward application of Proposition \ref{PIRloc} and Corollary \ref{cpfsim}.
  
Viceversa, if $\zeta(A)\simeq\bigtimes\limits_{i=1}^n SG_{k_i}$, then  $\zeta(A)\simeq \bigtimes\limits_{i=1}^n\zeta(\ZZ/(p_i)^{k_i})$. Since an artinian ring is total,  Corollary \ref{totale} implies that  $A\simeq \bigtimes\limits_{i=1}^n A_i$ where $\zeta(A_i)\simeq SG_{k_i}$.
Each $A_i$ is necessarily artinian; if $A_i$ were not local for a certain $i$, we would have  $A_i\simeq  B_1\times B_2$ with $B_1,B_2\not\simeq\tf{0}$, $\zeta(\ZZ/(p_i)^{k_i})\simeq\zeta(A_i)\simeq\zeta(B_1)\times\zeta(B_2)$ by Corollary \ref{cpfsim}, and another application of Corollary \ref{totale} would imply that also $\ZZ/(p_i)^{k_i}$ can be factorized, contradiction. Finally, each of the $A_i$ is PIR, as yielded by Proposition \ref{PIRloc}.
\end{proof}
\begin{remark}
  Keeping in mind Theorem \ref{finitetheta}, in order to complete the picture, one should understand the structure of $\zeta(A)$ when  $A$ is a finite ring, which is not PIR. This does not seem to be an easy task, since if  $A$ and $B$ are rings such that the posets  $\mc{I}_A$ and $\mc{I}_B$ are isomorphic, it is not true in general that  $\zeta(A)\simeq\zeta(B)$. For instance  consider  $A=\mb{F}_2[x,y]/(x^2,y^2)$\, and \,$B=\mb{F}_2[x,y]/(xy,x^2-y^2)$; then $\mc{I}_A\simeq \mc{I}_B$ as posets but every vertex in $\zeta(A)$ has a loop whereas $(\bar{x})\in\mc{PI}_B$ has none; therefore $\zeta(A)\not\simeq\zeta(B)$. 
  
\begin{center}
    \includegraphics[scale=0.25]{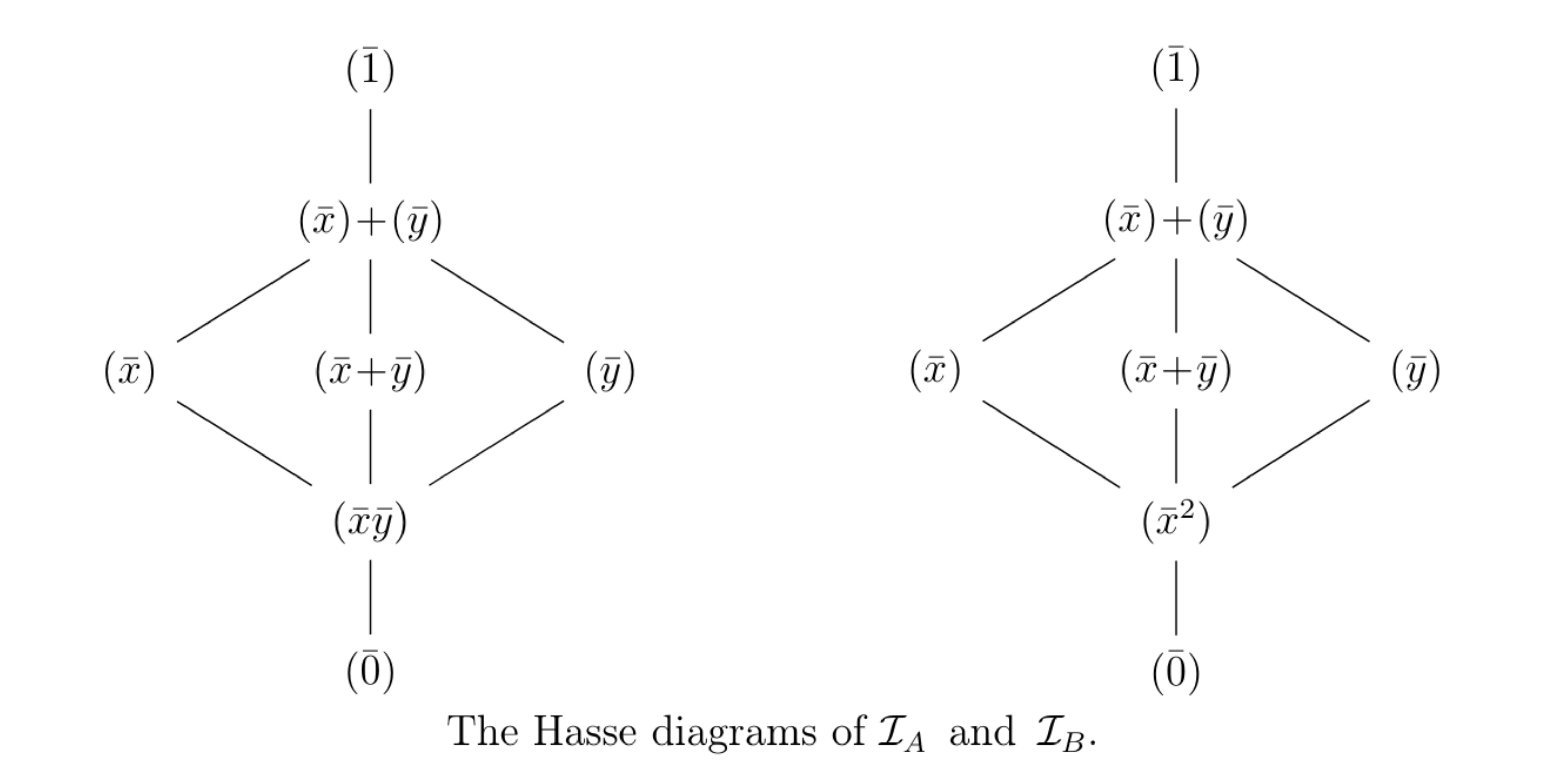}
\end{center}
\end{remark}

\end{document}